\newtheorem{theorem}{Theorem}[section]
\newtheorem{lemma}[theorem]{Lemma}
\theoremstyle{definition}
\newtheorem{definition}[theorem]{Definition}
\newtheorem{example}[theorem]{Example}
\newtheorem{proposition}[theorem]{Proposition}
\newtheorem{corollary}[theorem]{Corollary}
\theoremstyle{remark}
\newtheorem{remark}[theorem]{Remark}
\newcommand{\join}{\#}
\newcommand{\tens}{\tilde{\otimes}}
\newcommand{\Cyc}{{\mathcal C}}
\newcommand{\ZZ}{{\mathcal Z}}
\newcommand{\rtens}{\hat{\otimes}}
\newcommand{\C}{\mathbb{C}}
\newcommand{\Z}{\mathbb{Z}}
\newcommand{\Q}{\mathbb{Q}}
\renewcommand{\S}{\mathbb{S}}
\renewcommand{\P}{\mathbb{P}}
\newcommand{\PP}{\mathbb{P}}
\newcommand{\GG}{\mathbb{G}}
\def\S{{\mathbb S}}
\def\G{{\mathcal G}}
\def\GG{{\mathcal G}}
\def\BU{{\bf BU}}
\def\Cic{{\mathcal C}}
\begin{document}
\title[Non-existence of Tensor Products]{On the non-existence of Tensor Products of Algebraic Cycles}
\date{\today}
\author[L. \,E. Lopez]{Luis E. Lopez}
\address{Max-Planck-Institut f\"ur Mathematik\\
         Vivatsgasse 7\\
	 D-53111 Bonn \\
	 Germany
}
\curraddr{Max-Planck-Institut f\"ur Mathematik\\
         Vivatsgasse 7\\
	 D-53111 Bonn \\
	 Germany
}
\email{llopez@mpim-bonn.mpg.de}
\thanks{This work is based on part of the author's dissertation. It is a pleasure to thank H. Blaine Lawson for all
his guidance and helpful remarks.}
\subjclass[2000]{Primary: 14N05; Secondary: 55R25}
\keywords{Polar map, Gauss map, line bundles}
\begin{abstract} Let $\otimes$ be the map which classifies the tensor product of two line bundles, an extension
of this map to the space of all codimension $1$ algebraic cycles is constructed. It is proved that this extension
cannot exist in codimension greater than $1$.
\end{abstract}
\maketitle


\section{Introduction}

Boyer, Lawson, Lima-Filho, Mann and Michelsohn 
settled the Segal conjecture
in \cite{BLLMM}. One of the fundamental results which motivated the proof
is that there is a geometric construction which extends the map classifying
the direct sum of vector bundles in $\BU $ to the space $\ZZ$ of all algebraic cycles, 
namely, the linear join $\join$ of cycles, i.e. the following diagram commutes:
$$\xymatrix{
\BU \times \BU \ar[r]^-{\oplus} \ar[d]_c & \BU \ar[d]^c \\
\ZZ \times \ZZ \ar[r]^-{\join} & \ZZ 
}
$$
where $c$ is the total chern class map. \\

In \cite{BLLMM} the authors mention that one would like to have a geometric
construction on the space of algebraic cycles which extends the tensor
product in the level of $\BU$ (i.e. degree one cycles). Segal 
proved in \cite{Segaltensor} that $\BU$ has an infinite loop
space structure where the $H$-space structure is induced by the map
classifying the tensor product of vector bundles. 
Therefore the construction requested by the authors of \cite{BLLMM} would
give yet another infinite loop space structure on $\ZZ$. 
We will provide an idea of the extent to which such construction is possible:

\begin{theorem} \label{pairing_divisors} There is an algebraic biadditive pairing 
$\rtens$ which extends the tensor product  to all effective divisors:
$$\xymatrix{
	\Cyc^1_d(\PP^n) \times \Cyc^1_e(\PP^m) \ar@{->}[r]^-{\rtens} & \Cyc^{1}_{de}(\PP^{mn+m+n}) 
	}$$
\end{theorem}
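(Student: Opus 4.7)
The plan is to construct the pairing via an explicit polynomial formula, identifying it first on totally reducible (``split'') divisors and then extending to the full space by a classical invariant-theoretic expansion.

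Identify $\Cyc^1_d(\PP^n) = \PP(S^d V_n^*)$ with $V_n = \C^{n+1}$; the target is then $\PP(S^{de}(V_n \otimes V_m)^*)$, and $\PP^{mn+m+n} = \PP(V_n\otimes V_m)$ contains the Segre embedding of $\PP(V_n)\times\PP(V_m)$. For totally reducible $f = \prod_{i=1}^d L_i$ and $g = \prod_{j=1}^e M_j$ (with $L_i\in V_n^*$, $M_j\in V_m^*$), set
\[
f\rtens g \;:=\; \prod_{i,j}(L_i\otimes M_j) \;\in\; S^{de}(V_n\otimes V_m)^*,
\]
where each $L_i\otimes M_j$ is the linear form on $V_n\otimes V_m$ corresponding to the tensor product of the duals. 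This extends the Segre pairing in the hyperplane case ($d=e=1$ gives $L\rtens M = L\otimes M$) and is manifestly biadditive on the split locus by the multiplicative structure of the formula.

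Equivalently, $(f\rtens g)(Z) = \prod_j f(ZM_j)$, where $Z\in V_n\otimes V_m$ is viewed as a linear map $V_m^*\to V_n$. This expression is an $S_e$-symmetric polynomial in the factors $M_j$ of multi-degree $(d,\ldots,d)$, and the key step is to show that it descends to an explicit polynomial in the coefficients of $g=\prod M_j$ alone, thereby extending the formula to all $g\in S^e V_m^*$, not only to split ones. A classical invariant-theoretic argument (via polarization) shows that $\prod_j f(ZM_j)$ lies in the subring of $S_e$-invariants on $(V_m^*)^e$ generated by the elementary multi-symmetric polynomials, which are precisely the coefficients of $g$; it therefore admits a universal polynomial expansion in those coefficients. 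By symmetry (writing $\prod_i g(Z^{\top} L_i)$), the same expansion is also polynomial in the coefficients of $f$. This polynomial defines an algebraic morphism on the full product, and the biadditivity identity $(f_1 f_2)\rtens g = (f_1 \rtens g)(f_2 \rtens g)$ follows because both sides are polynomial functions of the coefficients determined by the same universal expansion, which is manifestly multiplicative on split divisors.

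The main obstacle is the invariant-theoretic expansion itself: exhibiting $\prod_j f(ZM_j)$ as an explicit polynomial in the coefficients of $g$. This is a classical but nontrivial symmetric-function identity, transparent in low degrees (Hermite-type reciprocity handles $e\le 2$ or $m\le 1$) and, in general, obtained by an inductive polarization argument exploiting the invariance of $\prod_j f(ZM_j)$ under the scaling $M_j\mapsto \lambda_j M_j$ with $\prod_j\lambda_j = 1$, which identifies the polynomial as a function on the Chow variety of split polynomials that in fact extends canonically across it.
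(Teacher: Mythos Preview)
Your strategy has a genuine gap at the invariant-theoretic step, and it is not merely a matter of supplying more detail.

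You assert that $\prod_j f(ZM_j)$, being $S_e$-invariant of multidegree $(d,\dots,d)$ in the $M_j$, lies in the subring generated by the coefficients of $g=\prod_j M_j$. Rephrased representation-theoretically, this asks whether the natural map
\[
S^d(S^e V_m)\longrightarrow S^e(S^d V_m)
\]
is surjective (the left side is degree-$d$ polynomials in the coefficients of $g$; the right side is the space of all $S_e$-invariants of multidegree $(d,\dots,d)$). This surjectivity fails already for $\dim V_m=3$, $d=2$, $e=3$: the source has dimension $\binom{11}{2}=55$ and the target $\binom{8}{3}=56$. Worse, the elements $\phi^{\otimes e}$ with $\phi\in S^d V_m$ span $S^e(S^d V_m)$ by polarization, and your function is exactly such an element with $\phi=f\circ(Z\,\cdot\,)$; since for $n\ge m$ and generic $Z$ one obtains every $\phi$, there exist $f,Z$ for which $\prod_j f(ZM_j)$ is \emph{not} a polynomial in the coefficients of $g$. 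So no ``classical'' argument can produce the universal expansion you invoke. The biadditivity step inherits the same defect: the locus of split $g$ is a proper subvariety of $S^eV_m^*$ once $m\ge 2$, so a polynomial identity verified only there does not extend.

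The paper avoids this entirely by a different construction. It fixes an ordered monomial basis and defines a multilinear map $\Psi_{de}$ directly on basis elements, then sets $f\tens g=\Psi_{de}(f,\dots,f,g,\dots,g)$; this is polynomial in the coefficients of $f$ and $g$ by fiat, with no extension from a split locus required. The paper explicitly remarks that the construction depends on the chosen ordering of the basis---precisely because, as the plethysm obstruction above shows, there is no canonical basis-free extension of $\prod_{i,j}(L_i\otimes M_j)$ to all divisors. Biadditivity is then proved by a direct combinatorial identity for $\Psi$, not by reduction to the split case.
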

This product is constructed via an algebraic pairing in the corresponding
rings of polynomials which may be of interest in its own right. The formula
obtained from stabilizing and group-completing the pairing to the stabilized space $\ZZ_{0}^1(\PP^{\infty})$
of algebraic cycles of codimension $1$ and degree $0$  
yields a commutative diagram  
$$\xymatrix{
  \GG^1(\PP^{\infty}) \times \GG^1(\PP^{\infty}) \ar[r] \ar[d]_{c_1} & \GG^1(\PP^{\infty}) \ar[d]^{c_1} \\
	\ZZ^1_0(\PP^{\infty}) \times \ZZ^1_0(\PP^{\infty}) \ar@{->}[r]^-{\rtens} & \ZZ^{1}_0(\PP^{\infty}) 
	}$$
which recovers the group 
structure in the second cohomology group given by the tensor product of line bundles
$$ c_1(L_1\otimes L_2) = c_1(L_1) + c_1(L_2) \in H^2(BU_1)$$ 

The Hurewicz map is the main tool in proving that a general pairing does
not exist. The following theorem calculates the classes pulled back by  the Hurewicz map.

\begin{theorem} \label{hurewicz} The inclusion of the grassmannian $\GG^1(\PP^n)$ of 
hyperplanes  in $\PP^n$ into the space $\ZZ^1(\PP^n)$ of all cycles in $\PP^n$ factors
through the free group $\Z \GG^1(\PP^n)$:
\begin{equation}
\label{factor}
\xymatrix{
\GG^1(\PP^n) \ar@{^{(}->}[r]^i & 
\Z\GG^1(\PP^n)\ar@{^{(}->}[r]^j \ar[d]^= & \ZZ^1(\PP^n) \ar[d]^= \\
 & \prod_{j=0}^n K(\Z,2j) & K(\Z,0)\times K(\Z,2)
}
\end{equation}

With respect to the canonical product decomposition given in (\href{factor}), the map
$i$ classifies the cohomology class $1\times \omega \times \cdots \times \omega^n$ where
$\omega$ is the multiplicative generator of $H^2(\GG^1(\PP^n))$  and
the map $j$ is homotopic to the projection $\pi_0 \times \pi_1$

\end{theorem}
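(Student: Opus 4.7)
The plan is to exploit the Dold-Thom theorem together with the known homotopy type of codimension $1$ cycle spaces. Since $\GG^1(\PP^n)$ is homeomorphic to the dual projective space $(\PP^n)^\vee$, its integral homology is free abelian and concentrated in even degrees $0,2,\dots,2n$; Dold-Thom therefore identifies the free topological abelian group $\Z\GG^1(\PP^n)$ with $\prod_{j=0}^n K(\Z,2j)$. The identification $\ZZ^1(\PP^n)\simeq K(\Z,0)\times K(\Z,2)$ is classical. The factorization map $j$ is obtained from the universal property of $\Z\GG^1(\PP^n)$: since $\ZZ^1(\PP^n)$ is itself a topological abelian group under cycle addition, the inclusion of hyperplanes extends uniquely and continuously to formal $\Z$-linear combinations.

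To identify the class classifying $i$, I would decompose it componentwise into maps $i_k\colon \GG^1(\PP^n)\to K(\Z,2k)$. By construction of the Dold-Thom equivalence, $i_k$ represents the element of $H^{2k}(\GG^1(\PP^n))$ which is Kronecker-dual to the fundamental generator of $H_{2k}(\GG^1(\PP^n))$. Under the isomorphism $H_{2k}((\PP^n)^\vee)\cong\Z$ with generator a linear sub-$\PP^k$, Poincaré duality identifies its Kronecker partner with $\omega^k$, where $\omega$ is the hyperplane class. Hence $i_k^*\iota_{2k}=\omega^k$, yielding the product class $1\times\omega\times\cdots\times\omega^n$.

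For the computation of $j$, the target splits as a product of two Eilenberg-MacLane spaces, so $j$ is determined up to homotopy by $j^*$ on $H^0$ and $H^2$. On $H^0$, the map $j$ sends a formal sum $\sum n_k[H_k]$ to the cycle $\sum n_k[H_k]$, whose degree is $\sum n_k$; this is precisely the projection onto the $K(\Z,0)$-factor of $\Z\GG^1(\PP^n)=\prod K(\Z,2k)$. For $H^2$, only the $K(\Z,2)$-factor contributes, so $j^*\alpha$ must be an integer multiple of $\pi_1^*\iota_2$, where $\alpha\in H^2(\ZZ^1(\PP^n))$ is the fundamental class. Naturality with respect to $i$ forces this multiple to be $1$: indeed, $i^*j^*\alpha$ equals the restriction of $\alpha$ to $\GG^1(\PP^n)$, which is the hyperplane class $\omega$, and by the previous paragraph this is also the pullback of $\pi_1^*\iota_2$. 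Thus $j\simeq\pi_0\times\pi_1$.

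The delicate step will be confirming that the tautological class $\iota_{2k}$ on each Eilenberg-MacLane factor pulls back under the Dold-Thom identification to exactly $\omega^k$ (rather than a nonzero multiple); this is essentially a compatibility between the orientation conventions underlying the Hurewicz and Poincaré pairings on $(\PP^n)^\vee$. Granting this and the identification of the universal degree-two class on $\ZZ^1(\PP^n)$ with a class restricting to $\omega$ on the Grassmannian of hyperplanes, the rest of the theorem is cohomological bookkeeping for maps into products of Eilenberg-MacLane spaces.
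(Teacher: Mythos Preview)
Your argument is correct and complete, but the route differs from the paper's. The paper proves $h^*(\iota_{2k})=\omega^k$ by induction on $n$: the base case $n=1$ is the Lawson--Michelsohn identification of the inclusion $\GG^1(\PP^1)\hookrightarrow\ZZ^1(\PP^1)$ with the total Chern class, and the inductive step feeds the cofibration $\PP^n\hookrightarrow\PP^{n+1}\to\S^{2(n+1)}$ through the Dold--Thom functor to obtain a quasifibration $\Z\PP^n\to\Z\PP^{n+1}\to\Z\S^{2(n+1)}$, which isolates the new factor $K(\Z,2(n+1))$ and reduces the question there to the Hurewicz map on a sphere. You instead argue directly and uniformly in $k$: the Dold--Thom inclusion, projected to the $k$-th Eilenberg--MacLane factor, represents the cohomology class Kronecker-dual to the standard homology generator $[\PP^k]$, which is $\omega^k$ by Poincar\'e duality on $(\PP^n)^\vee$.

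Your approach is more conceptual and avoids induction altogether; it also has the virtue of explicitly treating the map $j$, which the paper asserts is $\pi_0\times\pi_1$ but does not argue in its proof. The paper's inductive argument, on the other hand, is more self-contained in that it never invokes Poincar\'e duality or the Kronecker pairing, relying only on the Dold--Thom quasifibration theorem and the $n=1$ case from \cite{LM}. The ``delicate step'' you flag---that the constant is exactly $1$ rather than $\pm1$---is real but harmless: both your argument and the paper's ultimately pin it down by the same $n=1$ input from Lawson--Michelsohn, which fixes the normalization of the degree-$2$ class and hence, by multiplicativity of the cup product, of all $\omega^k$.
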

The pairing constructed for divisors in (\ref{pairing_divisors}) cannot be extended to a
continuous biadditive pairing on the space of cycles of higher codimension, but it does admit
an extension if we restrict the second factor of the pairing to the subgroup $\Z\GG^1(\PP^m)$ 
of cycles which are unions of hyperplanes (possibly with multiplicities). 

\begin{theorem}There is a continuous biadditive pairing $\tens$ which makes the following diagram commute
$$\xymatrix{
\GG^p(\PP^n) \times \GG^1(\PP^m) \ar[r]^{\otimes} \ar[d]_{c \times h}
& \GG^p(\PP^{nm +n +m}) \ar[d]^c \\
\ZZ^p_0(\PP^n) \times \Z_0\GG^1(\PP^m) \ar[r]^-{\tens}
& \ZZ^p_0(\PP^{nm+n+m})
}$$
\end{theorem}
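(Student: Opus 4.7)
The plan is to construct $\tens$ as a flat pullback of algebraic cycles along a linear projection determined by the second factor. Write $\PP^n = \PP(V)$ and $\PP^m = \PP(W)$, so that $\PP^{nm+n+m} = \PP(V \otimes W)$. A hyperplane $H \in \GG^1(\PP^m)$ is determined, up to a scalar, by a nonzero linear form $\psi \in W^*$, and the evaluation $e_\psi : V \otimes W \to V$ sending $v \otimes w$ to $\psi(w)\,v$ is surjective with kernel $V \otimes \ker\psi$ of codimension $n+1$. It induces a linear projection
$$\pi_\psi : \PP(V \otimes W) \setminus \PP(V \otimes \ker \psi) \longrightarrow \PP(V),$$
which is a Zariski-locally trivial affine bundle.

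For $Z \in \ZZ^p(\PP^n)$ with $p \le n$, I would define $Z \tens H := \overline{\pi_\psi^{-1}(Z)} \in \ZZ^p(\PP^{nm+n+m})$, the Zariski closure of the flat pullback. Because the base locus has codimension $n+1 > p$, no spurious components appear in the closure, so $Z \mapsto Z \tens H$ is just the usual flat-pullback homomorphism on cycles and is continuous for each fixed $H$. I then extend biadditively in the second variable, $Z \tens \sum_i n_i H_i := \sum_i n_i\,(Z \tens H_i)$, on $\Z \GG^1(\PP^m)$. Commutativity of the square is then nearly tautological: if $\Pi = \PP(U)$ is the codimension-$p$ plane with $U = \bigcap_i \ker \phi_i$ for $\phi_i \in V^*$, then $e_\psi^{-1}(U) = \bigcap_i \ker(\phi_i \otimes \psi)$ where $\phi_i \otimes \psi \in V^* \otimes W^* = (V \otimes W)^*$, so $\overline{\pi_\psi^{-1}(\Pi)}$ is the codimension-$p$ plane in $\PP(V \otimes W)$ cut out by the decomposable linear forms $\phi_i \otimes \psi$. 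This is the natural tensor-product plane that reduces to the Segre-defined tensor product of hyperplanes when $p = 1$, so it is the image of $(\Pi, H)$ under the top arrow $\otimes$.

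Because $\pi_\psi$ is defined by a subsystem of $|\mathcal{O}(1)|$, one has $\pi_\psi^* \mathcal{O}_{\PP^n}(1) = \mathcal{O}_{\PP^{nm+n+m}}(1)$, and so $\deg(Z \tens H) = \deg Z$ when $H$ is a single hyperplane; biadditivity then forces the pairing to restrict to $\ZZ^p_0(\PP^n) \times \Z_0 \GG^1(\PP^m) \to \ZZ^p_0(\PP^{nm+n+m})$. The main obstacle is the joint continuity of $(Z, H) \mapsto Z \tens H$ as both factors vary simultaneously. I would package the projections into a single morphism on the incidence variety $\{(x, [\psi]) : e_\psi(x) \ne 0\} \subset \PP(V \otimes W) \times \PP(W^*)$ over $\PP(V)$, which is flat, and invoke continuity of relative flat pullback on Chow varieties to produce a continuous family of cycles in $\PP(V \otimes W)$ parametrized by $\ZZ^p(\PP^n) \times \GG^1(\PP^m)$ that specializes fiberwise to $Z \tens H$. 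Continuity on the free group $\Z \GG^1(\PP^m)$ then follows from biadditivity.
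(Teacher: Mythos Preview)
The paper states this theorem in the introduction but does not supply a proof anywhere in the body; the only construction actually carried out is the codimension-one pairing of Section~2, together with Lemma~\ref{tensorlinearform}, which identifies $f\tens g$ for linear $f$ with the iterated suspension (linear join) of the cycle of $g$. Your construction---flat pullback along the linear projection $\pi_\psi$ and then closure---is precisely the geometric form of that suspension, so your approach is the natural one and is fully consistent with what the paper does in codimension one. The verification that $e_\psi^{-1}(U)=U\otimes W+V\otimes\ker\psi$ is correct, and this is exactly the kernel of $V\otimes W\to Q\otimes L$, so on linear spaces your $\tens$ agrees with the Grassmannian map $\otimes$.

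There is, however, a genuine gap in your commutativity argument. You check only that $\Pi\tens H=\Pi\otimes H$, but the vertical maps $c$ and $h$ send $\Pi\mapsto\Pi-\Pi_0$ and $H\mapsto H-H_0$, so the lower-left composite applied to $(\Pi,H)$ is
\[
(\Pi-\Pi_0)\tens(H-H_0)=\Pi\otimes H-\Pi_0\otimes H-\Pi\otimes H_0+\Pi_0\otimes H_0,
\]
which is \emph{not} equal to $c(\Pi\otimes H)=\Pi\otimes H-\Pi_0\otimes H_0$. The bare biadditive pairing therefore does not make the square commute on the nose. The paper handles the analogous issue in the divisor case by passing to the corrected pairing $\rtens(\eta,\xi)=\eta\tens\xi+\eta\tens\xi_0+\eta_0\tens\xi$ (see the definition preceding Theorem~\ref{diagramlinetensor}); you need the same correction here, and you should note---as the paper does not---that this corrected pairing is no longer strictly biadditive, so either the word ``biadditive'' in the statement must be read loosely, or the square is meant to commute only up to homotopy. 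Your sentence ``commutativity of the square is then nearly tautological'' papers over exactly this point.
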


The relevance of this diagram is twofold, on the one hand it provides a new way of calculating
the formula for the total chern class of the tensor product of a vector bundle and a line
bundle, namely
\begin{equation} \label{one}
 c_i(E\otimes L) = \sum_{j=0}^i {{rk(E) - j} \choose {i-j}}c_j(E)c_1(L)^{i-j}
 \end{equation} 
and on the other hand it suggests a path for  generalizing the Bott periodicity map which is related to 
the top arrow of this diagram (the problem for generalizing the Bott map is that there 
is no "orthogonal complement" in the space of cycles.) \\
The formula \ref{one} does not describe completely the map induced in cohomology by the pairing $\tens$, since
$h^{*}(i_{2k}^m)=h^{*}(i_{2t}^s)=\omega^{km}$ if $km=ts$. This calculation can actually be obtained
rationally:
\begin{theorem} \label{rationalcalculation}
The pairing 
$$
\ZZ^p_0(\PP^n) \times \Z_0\GG^1(\PP^m) \rightarrow \ZZ^p(\PP^{nm+n+m})
$$
induces the following map in rational cohomology
$$
\tens^*(i_{2k})= \sum_{j=0}^k{{p-j} \choose{k-j}}i_{2j} \otimes \tilde{i}_{2(i-j)}
$$
where $i_{2k}$ is the fundamental class of the $k$-th factor of $\ZZ^p_0(\PP^n) \simeq \prod_{j=1}^p K(\Z,2j)$
and $\tilde{i}_{2l}$ is the fundamental class in the $l$-th factor of
$\Z_0\GG^1(\PP^m) \simeq \prod_{j=1}^m K(\Z,2j)$
\end{theorem}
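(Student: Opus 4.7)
The plan is to exploit the biadditivity of $\tens$ to severely restrict the rational form of $\tens^*(i_{2k})$ and then to determine the remaining coefficients by pulling back to the Grassmannians, where formula~(\ref{one}) gives the Chern class of a tensor product explicitly.

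Rationally both sides of the pairing split as products of Eilenberg--MacLane spaces: $\ZZ^p_0(\PP^n)\simeq_{\Q}\prod_{j=1}^{p}K(\Z,2j)$ and $\Z_0\GG^1(\PP^m)\simeq_{\Q}\prod_{l=1}^{m}K(\Z,2l)$, so the rational cohomology rings are polynomial in the primitive generators $i_{2j}$ and $\tilde i_{2l}$. Because the inclusion and projection maps of each individual factor are rational H-space maps, and $\tens$ is biadditive, the composite
$$
K(\Q,2j)\times K(\Q,2l)\hookrightarrow\ZZ^p_0(\PP^n)\times\Z_0\GG^1(\PP^m)\xrightarrow{\tens}\ZZ^p_0(\PP^{nm+n+m})\to K(\Q,2k)
$$
is itself a biadditive pairing of rational Eilenberg--MacLane spaces. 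Such a pairing is rationally classified by a one-dimensional space of biprimitives when $k=j+l$ and is null otherwise, so I can write
$$
\tens^*(i_{2k})=\sum_{j=0}^{k} a_j^{(k)}\,i_{2j}\otimes\tilde i_{2(k-j)}
$$
for unique rational constants $a_j^{(k)}$ (with the convention $i_0=\tilde i_0=1$).

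Next, I would pull back via $c\times h$, where $c:\GG^p(\PP^n)\hookrightarrow\ZZ^p_0(\PP^n)$ classifies the Chern classes of the tautological rank-$p$ bundle $E$ and $h:\GG^1(\PP^m)\hookrightarrow\Z_0\GG^1(\PP^m)$ classifies the tautological line bundle $L$. Theorem~\ref{hurewicz} yields $h^*(\tilde i_{2l})=\omega^l=c_1(L)^l$, while $c^*(i_{2j})=c_j(E)$. Combining the commuting square for $\tens$ at the Grassmannian level with formula~(\ref{one}) gives
$$
(c\times h)^*\tens^*(i_{2k})=c_k(E\otimes L)=\sum_{j=0}^{k}\binom{p-j}{k-j}c_j(E)\,c_1(L)^{k-j}.
$$
Since $c_1(E),\dots,c_p(E),\omega$ are algebraically independent in $H^*(\GG^p(\PP^n)\times\GG^1(\PP^m);\Q)$ in the stable range, matching coefficients against the expression from the previous paragraph forces $a_j^{(k)}=\binom{p-j}{k-j}$.

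The step I expect to require the most care is the structural reduction in the first paragraph: a priori $\tens^*(i_{2k})$ could contain polynomial terms such as $i_{2j}^{2}$ or mixed cross terms of the shape $i_{2j}\tilde i_{2l}\cdot i_{2j'}\tilde i_{2l'}$, and one must verify that biadditivity combined with the factorwise H-space structure rationally kills every contribution that is not biprimitive (equivalently, that $\tens$ rationally factors through the smash product of the two factors). Once this is in place, the proof reduces to the Chern-class identity (\ref{one}) and to injectivity of $(c\times h)^*$ in the stable range, both of which are standard.
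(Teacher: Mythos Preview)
Your route is genuinely different from the paper's. The paper only carries out this computation in the special case $p=2$, $k=2$ (Proposition~\ref{trick}), and it does \emph{not} appeal to an abstract biprimitivity argument. Instead it writes the a~priori form $\rho^*(i_4)=1\otimes i_4+i_2\otimes i_2+a\,(i_4\otimes 1)+b\,(i_2^2\otimes 1)$ with $a+b=1$ coming from the Chern formula pulled back along $h$, and then separates the $i_4$ and $i_2^2$ contributions by a \emph{doubling trick}: precompose on the $\Z_0\GG^1$ factor with $\phi\colon\GG^1\times\GG^1\to\Z_0\GG^1$, $\phi(L_1,L_2)=(L_1-L_0)+(L_2-L_0)$. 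By Theorem~\ref{hurewicz} one has $\phi^*(i_4)=\omega_1^2+\omega_2^2$ while $\phi^*(i_2^2)=(\omega_1+\omega_2)^2$ acquires the cross term $2\omega_1\omega_2$; since $c_2(L_1\otimes E)+c_2(L_2\otimes E)$ contains no such cross term, one reads off $b=0$. Your biadditivity reduction is the Hopf-algebraic abstraction of exactly this phenomenon (primitives produce no cross terms under the comultiplication), and it has the advantage of treating all $p$ and $k$ uniformly, whereas the paper's explicit doubling would have to be iterated to resolve the higher-degree ambiguities.

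One point needs more care than you give it. A strictly biadditive pairing satisfies $\tens(0,\xi)=0=\tens(\eta,0)$, so its pullback of $i_{2k}$ can carry no $1\otimes\tilde i_{2k}$ or $i_{2k}\otimes 1$ component; yet the target formula contains both (the summands $j=0$ and $j=k$). Those boundary terms come not from the biadditive core but from the affine corrections $\eta_0\tens\xi$ and $\eta\tens\xi_0$ that one adds to make the Grassmannian square commute (compare the map $\mu$ in the proof of Theorem~\ref{nonexistence}). You should split the computation accordingly: apply your biprimitive reduction to the biadditive piece to pin down the interior coefficients $1\le j\le k-1$, and handle the two correction summands directly. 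As written, your identity $(c\times h)^*\tens^*(i_{2k})=c_k(E\otimes L)$ is not literally correct for the biadditive $\tens$ alone, and matching coefficients there would appear to force $a_0^{(k)}=a_k^{(k)}=0$, contradicting the answer.
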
 

Using  theorems \ref{hurewicz} and \ref{rationalcalculation} the following 
theorem is proved

\begin{theorem}
\label{pairing}
 There is no continuous biaddditive pairing in the stabilized space of cycles
which makes the following diagram commute
$$\xymatrix{
\BU \times \BU \ar[r]^-{\otimes} \ar[d]_{c \times c}
& \BU \ar[d]^c \\
\ZZ \times \ZZ \ar[r]^-{\tens}
& \ZZ
}$$

\end{theorem}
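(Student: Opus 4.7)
The plan is to derive a contradiction from the hypothesis that such a continuous biadditive pairing $\tens\colon \ZZ \times \ZZ \to \ZZ$ exists and makes the diagram homotopy commute. The obstruction will emerge from confronting the universal form that biadditivity imposes on $\tens^*$ in rational cohomology with the codimension-dependent formula of Theorem~\ref{rationalcalculation}.

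The first step is to extract what biadditivity says in rational cohomology. Rationally, $\ZZ$ is a product of Eilenberg-MacLane spaces, so $H^*(\ZZ;\Q)\cong \Q[i_0,i_2,i_4,\ldots]$ and the primitive elements for the join H-space structure are exactly the fundamental classes $i_{2k}$. Applying biadditivity in each variable (so that fixing either argument produces an H-map) forces $\tens^*(i_{2k})$ to lie in the span of tensor products of primitives, so there exist \emph{universal} rational constants $a_{j,l}$, depending only on $\tens$ and not on any later restriction, with
\[
\tens^*(i_{2k}) \;=\; \sum_{j+l=k} a_{j,l}\, i_{2j}\otimes i_{2l}.
\]

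The second step is to restrict the diagram along $\ZZ^p_0(\PP^\infty)\times \Z_0\GG^1(\PP^\infty)\hookrightarrow \ZZ\times \ZZ$ for varying $p$. Since codimensions add under tensor product, the image of the restricted $\tens$ lies in $\ZZ^p_0$, and the restriction is a biadditive extension of the tensor product of the tautological rank-$p$ bundle with a line bundle. Any such extension is determined on rational primitives by its restriction to $\GG^p\times\GG^1$, where Theorem~\ref{hurewicz} identifies the pullbacks of the classes $\tilde{i}_{2l}$ as the powers $\omega^l$ of the hyperplane class; this reduces the constraint to the classical formula for $c_k(E\otimes L)$. Consequently the restriction of $\tens^*$ to this subspace must agree with the formula of Theorem~\ref{rationalcalculation},
\[
\tens^*(i_{2k}) \;=\; \sum_{j=0}^{k}\binom{p-j}{k-j}\, i_{2j}\otimes \tilde{i}_{2(k-j)}.
\]
Comparing the two expressions forces $a_{j,k-j}=\binom{p-j}{k-j}$ for every $j$ and every $p\ge 1$. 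The left-hand side is an absolute constant, while the right-hand side is a nonconstant polynomial in $p$ of degree $k-j$ whenever $k>j$. No choice of $a_{j,k-j}$ can satisfy this system for all $p$, producing the required contradiction.

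The step I expect to require the most care is the second: one must verify that any biadditive extension on the subspace $\ZZ^p_0\times\Z_0\GG^1$ is forced, on rational primitives, to coincide with the explicit pairing whose cohomology is computed in Theorem~\ref{rationalcalculation}. Once that is in hand, the conflict between a fixed universal coefficient and a polynomial-in-$p$ coefficient --- which encodes the rank-dependence of the Chern class of a tensor product --- delivers the non-existence.
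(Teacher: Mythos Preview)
Your argument assembles the right ingredients but the final comparison is flawed. In Step~1 you correctly write $\tens^*(i_{2k}) = \sum_{j+l=k} a_{j,l}\, i_{2j}\otimes i_{2l}$ on $\ZZ\times\ZZ$. In Step~2 your rigidity argument correctly forces the restriction to $\ZZ^p_0\times\Z_0\GG^1$ to equal $\sum_j \binom{p-j}{k-j}\, i_{2j}\otimes\tilde{i}_{2(k-j)}$. But to conclude $a_{j,k-j}=\binom{p-j}{k-j}$ you are tacitly assuming that the class $i_{2l}$ on $\ZZ$ restricts to $\tilde{i}_{2l}$ on $\Z_0\GG^1$. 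This is exactly what Theorem~\ref{hurewicz} denies: the inclusion $j\colon\Z_0\GG^1(\P^n)\hookrightarrow\ZZ^1_0(\P^n)$ is homotopic to the projection $\pi_1$ onto the $K(\Z,2)$ factor, so $i_{2l}|_{\Z_0\GG^1}=0$ for every $l>1$. Hence the restriction of your universal expression is only $a_{k,0}\,i_{2k}\otimes 1 + a_{k-1,1}\,i_{2(k-1)}\otimes\tilde{i}_2$, and the equation $a_{j,k-j}=\binom{p-j}{k-j}$ never materializes.

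The actual contradiction is that these two descriptions of the \emph{same} restriction are incompatible once $k\geq 2$: your rigidity forces a nonzero coefficient on $i_{2(k-2)}\otimes\tilde{i}_4$, while the factorization through $\pi_1$ forbids any $\tilde{i}_{2l}$ with $l>1$. That is precisely the paper's argument in Theorem~\ref{nonexistence}: Proposition~\ref{trick} plays the role of your rigidity step, pinning down the coefficient of $i_4\otimes 1$ in $\rho^*(i_4)$ via a doubling trick, and the contradiction is that this term cannot lie in the image of $(\pi_1\times\mathrm{id})^*$. Your biadditivity-plus-Hurewicz rigidity is in fact a clean substitute for Proposition~\ref{trick}; you simply need to pair it with the \emph{correct} restriction of $i_{2l}$ along $j$, not the one you assumed. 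Note also that no variation of $p$ is needed --- a single $p\geq 2$ already yields the contradiction.
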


\begin{remark}
Burt Totaro used the projection formula  in \cite{tot} to prove that the total chern class 
map does not extend to a map of cohomology theories. This is also implied by the results in
this paper.
\end{remark}

%
                         
\section{Tensor Pairing for divisors}

In this section we will define a product
$$\tens : \Cic^1(\P^{n-1}) \times \Cic^1(\P^{m-1})\rightarrow
\Cic^1(\P^{mn-1}) $$
which is continuous and biadditive (each of the factors has the structure
of a topological monoid). This pairing generalizes the pairing on
cycles of degree $1$ which classifies the tensor product of the universal
quotient bundle. \\

 Let $\C[\bar{x}]_d := \C[x_0,\ldots,x_{n-1}]_d$ denote the set
of complex polynomials of degree $d$ in the variables $x_0,\ldots,  x_{n-1}$.
This set is a complex vector space of dimension $N= \binom{n+d}{n}$. If we order the variables
lexicographically, we get the following ordered basis for this vector space:
$${\mathcal B}:=\{x_0x_0\cdots x_0, \ldots, x_{j_1}x_{j_2}\cdots x_{j_{n-1}},
\ldots,x_{n-1}\cdots x_{n-1} \}$$
where $j_1 \leq j_2 \leq \ldots \leq j_{n-1}$, i.e. the set of all monomials
of degree $d$ ordered lexicographically. 

\begin{definition} Let 
$$\Psi_{de}: \underbrace{\C[\bar{x}]_d \times \cdots \times  
\C[\bar{x}]_d}_{m- \mathrm{times}}
 \times  \underbrace{ \C[\bar{y}]_e \times  \cdots \times \C[\bar{y}]_e}_
{n-\mathrm{times}} 
\rightarrow \C[\bar{z}]_{de}$$
be the multilinear homomorphism defined on the elements of the bases 
$\mathcal B$ by
\begin{multline*}\Psi_{de}(x_{j_1^1}\cdots x_{j_d^1},\ldots, x_{j_1^e}\cdots
x_{j_d^e},y_{k_1^1}\cdots y_{k_e^1},\ldots,y_{k_1^d} \cdots
y_{k_e^d}) = \\
 z_{j_1^1k_1^1}\cdots z_{j_d^1k_1^d} \cdots z_{j_1^ek_e^1}\cdots 
z_{j_d^ek_e^d}
\end{multline*} 
and extended multilinearly.
\end{definition}

Notice that this definition depends on the ordered bases  $\mathcal B$, in particular, a 
different order on the elements would yield a different homomorphism.

The function $\Psi_{1e}$  has a particularly nice expression when the
degree $1$ forms are monomials. 

\begin{lemma} \label{psilinearmonomial}
$$\Psi_{1e}(x_{i_1},\ldots,x_{i_e},g) =g(z_{i_1j^1},\ldots,z_{i_ej^e})$$
\end{lemma}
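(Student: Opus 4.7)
The plan is to reduce the identity to a bookkeeping check on the ordered monomial basis $\mathcal B$ of $\C[\bar y]_e$, exploiting the multilinearity built into the definition of $\Psi_{de}$.

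First I would note that $\Psi_{1e}$ is linear in its last argument by construction, and the right-hand side $g(z_{i_1j^1},\ldots,z_{i_ej^e})$ is likewise linear in $g$ once one fixes the substitution convention read off from the ordered basis $\mathcal B$: for a basis monomial $g=y_{k_1}\cdots y_{k_e}$ with $k_1\le\cdots\le k_e$, the $a$-th factor (symbolically denoted $y_{j^a}$) is the variable $y_{k_a}$ and is replaced by $z_{i_aj^a}=z_{i_ak_a}$. Since both sides are linear in $g$, it suffices to verify the identity on such basis monomials.

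Next I would simply specialize the general formula defining $\Psi_{de}$ to $d=1$. With $d=1$, the outer subscript on the $j$'s collapses so that $j_1^a=i_a$, and the outer superscript on the $k$'s collapses so that $k_a^1=k_a$; the defining expression then becomes
$$\Psi_{1e}(x_{i_1},\ldots,x_{i_e},y_{k_1}\cdots y_{k_e})=z_{i_1 k_1}z_{i_2 k_2}\cdots z_{i_e k_e},$$
which matches $g(z_{i_1j^1},\ldots,z_{i_ej^e})$ under the substitution rule above. Extending by linearity in $g$ gives the stated identity for all $g\in\C[\bar y]_e$.

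There is no genuine obstacle here; the lemma is essentially an unpacking of the definition in the special case $d=1$. The only point warranting explicit comment is the consistency of conventions: since $\Psi_{de}$ is defined only after fixing the lexicographic ordering on $\mathcal B$, the symbolic substitution on the right-hand side must be read off using the same ordering, otherwise expressions such as $z_{i_1 k_1}z_{i_2 k_2}$ and $z_{i_1 k_2}z_{i_2 k_1}$ would both present themselves as candidates for the value. Once this convention is stated once at the outset, the verification is immediate.
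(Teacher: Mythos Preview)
Your proof is correct and follows essentially the same route as the paper: both expand $g$ in the ordered monomial basis, use linearity of $\Psi_{1e}$ in its last argument, and then read off the value $z_{i_1k_1}\cdots z_{i_ek_e}$ on a basis monomial $y_{k_1}\cdots y_{k_e}$ directly from the definition with $d=1$. Your explicit remark about the ordering convention is a useful clarification of what the substitution notation on the right-hand side means, but the underlying argument is identical to the paper's.
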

\begin{proof} Let $g = \sum b_{j_1 \cdots j_e} y_{j_1}\cdots y_{j_e}$. Then, 
following the definition we get
\begin{multline*}
\Psi(x_{i_1},\ldots,x_{i_e},g) =
\Psi(x_{i_1},\ldots,x_{i_e},\sum b_{j_1 \cdots j_e} y_{j_1}\cdots y_{j_e}) =\\ 
\sum b_{j_1 \cdots j_e} \Psi(x_{i_1},\ldots,x_{i_e},y_{j_1}\cdots y_{j_e}) =
\sum b_{j_1 \cdots j_e} z_{i_1j_1}\cdots z_{i_ej_e}=\\
g(z_{i_1j_1},\ldots,z_{i_ej_e})
\end{multline*}

\end{proof}

With the previous definition, we can now define the tensor product of divisors:

\begin{definition} Given  $f \in \C[x_0,\ldots,x_{n-1}]_d$ and $g \in \C[y_0,\ldots,y_{m-1}]_e$
we define $f \tens g \in \C[\ldots, z_{jk},\ldots]$ by
$$f \tens g := \Psi_{de}
( \underbrace{f,\ldots,f}_{e-\mathrm{times}}, \underbrace{g,\ldots, g}_{d-\mathrm{times}})
$$
\end{definition}

\begin{example} Let $f= x_0^2-3x_1x_2$ and $g= y_5y_7$. Then 
\begin{multline*}
f \tens g = \Psi(f,f,g,g)=\Psi(x_0^2-3x_1x_2,x_0^2-3x_1x_2,y_5y_7,y_5y_7)=\\
\Psi(x_0^2,x_0^2-3x_1x_2,y_5y_7,y_5y_7) - 3\Psi(x_1x_2,x_0^2-3x_1x_2,y_5y_7,y_5y_7)=\\
\Psi(x_0^2,x_0^2,y_5y_7,y_5y_7) - 3\Psi(x_0^2,x_1x_2,y_5y_7,y_5y_7)\\
-3\Psi(x_1x_2,x_0^2,y_5y_7,y_5y_7)+9\Psi(x_1x_2,x_1x_2,y_5y_7,y_5y_7)=\\
z_{05}z_{05}z_{07}z_{07}-3z_{05}z_{05}z_{17}z_{27}
-3z_{15}z_{25}z_{07}z_{07}+9z_{15}z_{25}z_{17}z_{27}
\end{multline*}
\end{example}

The next result is the first step towards proving that the pairing is indeed
 biadditive.

\begin{proposition} 

\begin{multline*}
\Psi_{rm}(f_1,\ldots,f_m,g,\ldots,g)
\Psi_{sm}(\phi_1,\ldots,\phi_m,g,\ldots,g)=\\
\Psi_{(r+s)m}(f_1\phi_1,\ldots,f_m\phi_m,g,\ldots,g)
\end{multline*}

\end{proposition}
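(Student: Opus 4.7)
The plan is a reduction to monomials followed by a direct calculation. Since $\Psi_{rm}$, $\Psi_{sm}$, and $\Psi_{(r+s)m}$ are each multilinear in every slot, both sides of the stated identity are polynomial in the inputs $(f_1,\ldots,f_m,\phi_1,\ldots,\phi_m,g)$, so by distributing over the monomial basis it suffices to verify the identity when each of the $f_b$, $\phi_b$, and $g$ is a single basis monomial from $\mathcal B$.

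Write the monomial inputs as $f_b=x_{j_1^b}\cdots x_{j_r^b}$, $\phi_b=x_{k_1^b}\cdots x_{k_s^b}$, and $g=y_{l_1}\cdots y_{l_m}$. Applying the defining formula for $\Psi$, and using that every $y$-slot carries the same monomial $g$ (so the $y$-index in position $b$ is always $l_b$, independent of which copy of $g$ one is looking at), one obtains
\begin{align*}
\Psi_{rm}(f_1,\ldots,f_m,g,\ldots,g) &= \prod_{a=1}^{r}\prod_{b=1}^{m} z_{j_a^b l_b},\\
\Psi_{sm}(\phi_1,\ldots,\phi_m,g,\ldots,g) &= \prod_{a=1}^{s}\prod_{b=1}^{m} z_{k_a^b l_b}.
\end{align*}
Multiplying these together and regrouping by the common outer index $b$ yields $\prod_{b=1}^m\bigl(\prod_{a=1}^{r} z_{j_a^b l_b}\cdot\prod_{a'=1}^{s} z_{k_{a'}^b l_b}\bigr)$. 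For the right-hand side, $f_b\phi_b$ is the degree-$(r+s)$ monomial $x_{j_1^b}\cdots x_{j_r^b}x_{k_1^b}\cdots x_{k_s^b}$; the crucial point is that, again because every $y$-slot carries the same $g$, every second index produced by $\Psi_{(r+s)m}(f\phi,g,\ldots,g)$ collapses to $l_b$, so the value is invariant under any permutation of the $r+s$ variables inside each monomial $f_b\phi_b$. Choosing the order that lists the $j$'s first and then the $k$'s therefore recovers the identical product.

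The main obstacle is precisely this final reordering step. In general $\Psi$ is order-sensitive in its $x$-arguments: swapping $j_a^b$ and $j_{a'}^b$ forces a pairing with different $k_b^a$ and $k_b^{a'}$ and so changes the value, which a priori prevents one from freely moving the $k$-variables of $\phi_b$ past those of $f_b$ when computing $\Psi_{(r+s)m}(f\phi,g,\ldots,g)$. The equality of all $g$-arguments is exactly what removes this obstruction by collapsing all second indices to $l_b$, and it is this structural feature that makes the biadditivity identity go through; the remaining steps reduce to straightforward indexing bookkeeping.
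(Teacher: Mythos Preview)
Your reduction to monomials is valid for the $f_b$ and $\phi_b$ (each appears in exactly one slot, and both sides are genuinely linear in it), but it is \emph{not} valid for $g$. The polynomial $g$ occupies $r$ slots in the first factor, $s$ slots in the second, and $r+s$ slots on the right; so both sides are homogeneous of degree $r+s$ in the coefficients of $g$. Checking a degree-$(r+s)$ polynomial only at the ``monomial'' points (one coefficient nonzero, the rest zero) does not determine it --- all the cross terms are invisible there. Concretely: take $r=s=1$, $m=1$, $f_1=x_j$, $\phi_1=x_k$ with $j>k$, and expand a general $g=\sum_l a_l y_l$ by multilinearity. On the left the term with $(y_l,y_{l'})$ contributes $a_l a_{l'}\, z_{jl}z_{kl'}$; on the right, because $x_jx_k$ must first be reordered to $x_kx_j$ before applying the defining formula, the same pair contributes $a_l a_{l'}\, z_{kl}z_{jl'}$. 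These do not match term by term. Your ``collapsing the second index to $l_b$'' mechanism only fires when $g$ is literally a single monomial, and that is precisely the diagonal on which the discrepancy vanishes.

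What actually makes the identity true --- and what the paper's proof uses --- is a symmetry argument, not a collapse. After reducing the $f_b$ and $\phi_b$ to monomials one keeps $g$ general, expands by multilinearity in the $g$-slots, and observes that because \emph{every} $g$-slot carries the same $g$, the coefficients $a_{l^{(1)}}\cdots a_{l^{(r+s)}}$ are symmetric under permuting the tuple $(l^{(1)},\ldots,l^{(r+s)})$. This symmetry of the \emph{sum} (not of individual terms) is exactly what absorbs the reordering forced by writing $f_b\phi_b$ in lexicographic order; the paper records this as ``if we exchange the definition of $\sigma$ and $\tau$ the sum remains unchanged --- this happens because we are taking two copies of $g$.'' Your final paragraph correctly identifies that the equality of the $g$-arguments is the crucial structural feature, but the mechanism you invoke is wrong, and as written the argument does not close.
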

\begin{proof}
Suppose that

\begin{multline*}
\Psi_{rm}(f_1,\ldots,f_m,g,\ldots,g)\Psi_{sm}(\phi_1,\ldots,\phi_m,g,\ldots,g)=\\
\Psi_{(r+s)m}(f_1\phi_1,\ldots,f_m\phi_m,g,\ldots,g)
\end{multline*}

and

\begin{multline*}
\Psi_{rm}(F,f_2,\ldots,f_m,g,\ldots,g)\Psi_{sm}(\phi_1,\ldots,\phi_m,g,\ldots,g)=\\
\Psi_{(r+s)m}(F\phi_1,f_2\phi_2, \ldots,f_m\phi_m,g,\ldots,g)
\end{multline*}

then it follows from the multilinearity of $\Psi_{ij}$ that
\begin{multline*}
\Psi_{rm}(f_1+cF,f_2,\ldots,f_m,g,\ldots,g)\Psi_{sm}(\phi_1,\ldots,\phi_m,g,\ldots,g) =\\
\left[\Psi_{rm}(f_1,\ldots,f_m,g,\ldots,g)+\Psi_{rm}(cF,f_2,\ldots,f_m,g,\ldots,g) \right]
\Psi_{sm}(\phi_1,\ldots,\phi_m,g,\ldots,g)= \\ \Psi_{(r+s)m}(f_1\phi_1,f_2\phi_2,\ldots,f_m\phi_m,g,\ldots,g)+
\Psi_{(r+s)m}(f_1\phi_1+cF\phi_1,f_2\phi_2,\ldots,f_m\phi_m,g\ldots,g)=\\
\Psi_{(r+s)m}((f_1+cF)\phi_1,\ldots,f_m\phi_m,g,\ldots,g)
\end{multline*}
therefore, it suffices to prove the statement in the case that
$f_1$ is a monic monomial. Analogously, it suffices to prove the statement
in the case that every $f_i$ is a monomial and $\phi_i$ is a monomial. That is,
we must show 
\begin{multline} \label{reduction}
\Psi_{rm}(x_{i_1^1}\cdots x_{i_r^1},\ldots,x_{i_1^m}\cdots x_{i_r^m},g,\ldots,g) 
\Psi_{sm}(x_{k_1^1}\cdots x_{k_s^1},\ldots,x_{x_1^m}\cdots x_{i_s^m},g,\ldots,g)=\\
\Psi_{(r+s)m}(x_{i_1^1}\cdots x_{i_r^1}x_{k_1^1}\cdots x_{k_s^1},\ldots, x_{i_1^m} \cdots
x_{i_r^m}x_{k_1^m}\cdots x_{k_m^s},g,\ldots,g)
\end{multline}
without loss of generality we may assume that $x_{i_a^w} \leq x_{i_b^w}$ if $a \leq b$
and $x_{k_a^w} \leq x_{k_b^w}$ if $a \leq b$. We will prove equation
\ref{reduction} by induction on $r$ and $s$. \\

{\bf Base: $r=1$ and $s=1$.} Notice that by lemma \ref{psilinearmonomial}

$$\Psi(x_{i^1_1},\ldots, x_{i^m_1},g) = g(z_{i_1^1j_1^1},
\ldots, z_{i^m_1j^1_m})$$
therefore,
\begin{multline*}
\Psi(x_{i_1^1},\ldots,x_{i_1^m},g)\Psi(x_{k^1_1},\ldots,x_{k_1^m},g) =\\
g(z_{i_1^1j_1^1},\ldots, z_{i^m_1j^1_m}) g(z_{i_1^1k_1^1}, \ldots, z_{i^m_1k^1_m})
\end{multline*}
On the other hand, if
$$g(y_1,\ldots,y_m)= \sum a_{j_1 \cdots j_r}y_{j_1}\cdots y_{j_r}$$
then
\begin{multline} \label{eq33}
\Psi(x_{i^1_1}x_{k^1_1},\ldots,x_{i^m_1}x_{k^m_1},g,g)=\\
\Psi(x_{i^1_1}x_{k^1_1},\ldots,x_{i^m_1}x_{k^m_1},
\sum a_{j_1^1 \cdots j_r^1}y_{j_1^1}\cdots y_{j_r^1},
\sum a_{j_1^2 \cdots j_r^2}y_{j_1^2}\cdots y_{j_r^2})=\\
\sum a_{j_1^1 \cdots j_r^1}a_{j_1^2 \cdots j_r^2}
\Psi(x_{i^1_1}x_{k^1_1},\ldots,x_{i^m_1}x_{k^m_1},
y_{j_1^1}\cdots y_{j_r^1},
y_{j_1^2}\cdots y_{j_r^2})= \\
\sum a_{j_1^1 \cdots j_r^1}a_{j_1^2 \cdots j_r^2}
z_{\sigma_1^1j_1^1} \cdots z_{\sigma_1^mj_m^1}
z_{\tau_1^1j^2_1}\cdots z_{\tau_1^mj_m^2} 
\end{multline}
where 
\begin{equation}
\nonumber
\sigma_s^t =
\begin{cases}
i_s^t, & \text{if $i_s^t \leq k_s^t$} \\
k_s^t, & \text{if $i_s^t > k_s^t$}
\end{cases}
\; \; \text{and} \; \; 
\tau_s^t =
\begin{cases}
k_s^t, & \text{if $i_s^t \leq k_s^t$} \\
i_s^t, & \text{if $i_s^t > k_s^t$}
\end{cases}
\end{equation}
Now, notice that if we exchange the definition of $\sigma$ and $\tau$ the sum
on the right hand side of \ref{eq33} remains unchanged. This happens because
we are taking two copies of $g$. Therefore we may assume without loss of 
generality that $\sigma_s^t = i_s^t$ and $\tau_s^t = k_s^t$. In this case,
equation \ref{eq33} becomes
\begin{multline} \label{eq34}
\Psi(x_{i^1_1}x_{k^1_1},\ldots,x_{i^m_1}x_{k^m_1},g,g)=\\
\sum a_{j_1^1 \cdots j_r^1}a_{j_1^2 \cdots j_r^2}
z_{i_1^1j_1^1} \cdots z_{i_1^mj_m^1}
z_{k_1^1j^2_1} \cdots z_{k_1^mj_m^2} = \\
\left(\sum a_{j_1^1 \cdots j_r^1}
z_{i_1^1j_1^1} \cdots z_{i_1^mj_m^1}
\right)
\left(\sum a_{j_1^2 \cdots j_r^2}
z_{k_1^1j^2_1} \cdots z_{k_1^mj_m^2}
\right)= \\
g(z_{i_1^1j_1^1},\ldots, z_{i^m_1j^1_m}) g(z_{i_1^1k_1^1}, \ldots, z_{i^m_1k^1_m})
\end{multline}
and the base of the induction is proved. \\
{\bf Inductive step:} Essentialy the same argument proves that both sides of
\ref{reduction} are equal to

\begin{multline} \label{reduction2}
g(z_{i_1^1j_1^1},\ldots,z_{i_1^mj_m^1})\cdots 
g(z_{i_r^1j_1^r},\ldots,z_{i_r^mj_m^r})\cdot \\
g(z_{k_1^1j_1^1},\ldots,z_{k_1^mj_m^1})\cdots
g(z_{k_s^1j_1^s},\ldots,z_{k_s^mj_m^s})
\end{multline}
\end{proof}

\begin{corollary}
$$(f_1f_2)\tens g = (f_1 \tens g)(f_2 \tens g)$$
\end{corollary}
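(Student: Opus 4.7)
The plan is to deduce the corollary as a diagonal specialization of the proposition just proved. Writing $d_i = \deg f_i$ for $i=1,2$ and $e = \deg g$, unpacking the definition of $\tens$ gives
\[
f_i \tens g = \Psi_{d_i e}(\underbrace{f_i,\ldots,f_i}_{e},\underbrace{g,\ldots,g}_{d_i}),
\]
and likewise
\[
(f_1 f_2)\tens g = \Psi_{(d_1+d_2)e}(\underbrace{f_1 f_2,\ldots,f_1 f_2}_{e},\underbrace{g,\ldots,g}_{d_1+d_2}).
\]

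Next I would apply the proposition with parameters $r=d_1$, $s=d_2$, $m=e$, taking each of the $m$ inputs in the ``$f_i$'' slot of the proposition equal to the single polynomial $f_1$ and each of the $m$ inputs in the ``$\phi_i$'' slot equal to the single polynomial $f_2$. With these choices the conclusion of the proposition reads exactly
\[
\Psi_{d_1 e}(f_1,\ldots,f_1,g,\ldots,g)\cdot \Psi_{d_2 e}(f_2,\ldots,f_2,g,\ldots,g) = \Psi_{(d_1+d_2)e}(f_1 f_2,\ldots, f_1 f_2, g,\ldots,g),
\]
and reading both sides back through the definition of $\tens$ yields $(f_1\tens g)(f_2\tens g) = (f_1 f_2)\tens g$.

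There is no real obstacle here: the corollary is literally the diagonal case of the proposition, where all first-group arguments are collapsed to one polynomial on each side. The only bookkeeping is the match of counts, namely that the number of copies of $g$ grows from $d_1$ and $d_2$ on the two separate $\tens$-factors to $d_1+d_2$ on the combined side, which is precisely $\deg(f_1 f_2)$, so the definition of $\tens$ applied to the product $f_1 f_2$ agrees with what the proposition outputs.
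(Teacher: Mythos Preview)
Your proof is correct and is exactly the argument the paper has in mind: the paper's own proof simply says ``This follows immediately from the definitions and the proposition,'' and your write-up is the explicit unpacking of that sentence, specializing the proposition with $r=d_1$, $s=d_2$, $m=e$, and all $f$-slots equal to $f_1$, all $\phi$-slots equal to $f_2$.
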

\begin{proof}
This follows immediately from the definitions and the proposition.
\end{proof}
Analogously the following is true
\begin{corollary}
$$f\tens (g_1 g_2) = (f \tens g_1)(f \tens g_2)$$
\end{corollary}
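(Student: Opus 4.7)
The plan is to mirror the argument from the companion corollary $(f_1f_2)\tens g = (f_1 \tens g)(f_2 \tens g)$, but in the opposite slot of $\Psi$. Unfolding the definition of $\tens$, the claim reduces to the identity
$$\Psi_{de_1}(\underbrace{f,\ldots,f}_{e_1},\underbrace{g_1,\ldots,g_1}_{d})\,\Psi_{de_2}(\underbrace{f,\ldots,f}_{e_2},\underbrace{g_2,\ldots,g_2}_{d}) = \Psi_{d(e_1+e_2)}(\underbrace{f,\ldots,f}_{e_1+e_2},\underbrace{g_1g_2,\ldots,g_1g_2}_{d}),$$
with $g_i\in\C[\bar{y}]_{e_i}$. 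I would establish this identity directly, in complete analogy with the preceding proposition, but combining polynomials in the $y$-slot rather than the $x$-slot.

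First I would invoke the multilinearity of $\Psi$ in each argument to reduce to the case where $f$, $g_1$, and $g_2$ are all monomials, following the same formal reduction used at the start of the proposition's proof. Once $f$ is a monomial $x_{i_1}\cdots x_{i_d}$, the computation carried out in that proof (the base case calculation (\ref{eq33})--(\ref{eq34}) and its inductive generalization (\ref{reduction2})) exhibits $\Psi_{de'}(f,\ldots,f,g',\ldots,g')$ as an explicit product of $d$ evaluations of $g'$ on $z$-tuples determined by the indices of $f$.

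The decisive observation is that polynomial evaluation is multiplicative in the polynomial: $(g_1g_2)(z_{\cdot\,\cdot}) = g_1(z_{\cdot\,\cdot})\,g_2(z_{\cdot\,\cdot})$. Substituting this into each of the $d$ factors in the explicit formula for the right-hand side splits it into precisely the product of the analogous formulas for $\Psi_{de_1}(f^{e_1},g_1^d)$ and $\Psi_{de_2}(f^{e_2},g_2^d)$. The only step requiring real care is checking that the $z$-tuples appearing in the factorizations on the two sides match up; this is purely a combinatorial bookkeeping of indices, completely parallel to the induction in the proposition, and introduces no new ideas.
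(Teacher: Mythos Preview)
Your proposal is correct and is exactly what the paper intends: its entire proof is the single word ``Analogously,'' and you have spelled out the symmetric version of the proposition's argument. One small remark: the multilinearity reduction you invoke does not literally let you assume the \emph{same} $f$ (or $g_i$) in every slot is a monomial, since expanding one slot produces cross terms with different entries; what actually happens---just as in the proposition---is that you first pass to the more general identity with independent polynomials in each slot, reduce those to monomials one at a time, and then specialize back, which your phrase ``following the same formal reduction'' already covers.
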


This theorem gives some insight into the geometry of the hypersurfaces
obtained as $\tens$ products. The next lemma provides the description
in the case that one of the factors is a linear space.

\begin{lemma} \label{tensorlinearform}
If $f=\sum a_i x_i$ then 
$$f \tens g = g\left(f\left(z_{00},\ldots,z_{(n-1)0}\right),\ldots,
f\left(z_{0(m-1)},\ldots,z_{(n-1)(m-1)}\right)\right)$$
That is, the codimension $1$ cycle defined by $f \tens g$ is
isomorphic to the linear join of a linear space and the cycle
defined by $g$ (Using Lawson's terminology, it is the iterated
suspension of the cycle defined by $g$)
\end{lemma}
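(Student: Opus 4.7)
The plan is to chain Lemma \ref{psilinearmonomial} with the multilinearity of $\Psi_{1e}$ and a short bookkeeping calculation, and then read the geometric description off the resulting formula.

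Since $f$ has degree $d=1$, the definition of $\tens$ reduces to
$$f\tens g \;=\; \Psi_{1e}(\underbrace{f,\ldots,f}_{e\text{-times}},g).$$
Writing $f = \sum_i a_i x_i$ and using multilinearity of $\Psi_{1e}$ in each of the first $e$ slots gives
$$f\tens g \;=\; \sum_{i_1,\ldots,i_e} a_{i_1}\cdots a_{i_e}\,\Psi_{1e}(x_{i_1},\ldots,x_{i_e},g).$$
By the computation in the proof of Lemma \ref{psilinearmonomial}, each term $\Psi_{1e}(x_{i_1},\ldots,x_{i_e},g)$ equals $\sum_{j_1,\ldots,j_e} b_{j_1\cdots j_e}\,z_{i_1 j_1}\cdots z_{i_e j_e}$, where $g = \sum b_{j_1\cdots j_e}\,y_{j_1}\cdots y_{j_e}$.

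Interchanging summations and grouping the sum over $i_s$ with the factor $z_{i_s j_s}$, one rewrites
$$f\tens g \;=\; \sum_{j_1,\ldots,j_e} b_{j_1\cdots j_e}\prod_{s=1}^{e}\Bigl(\sum_{i_s} a_{i_s} z_{i_s j_s}\Bigr)
\;=\; \sum_{j_1,\ldots,j_e} b_{j_1\cdots j_e}\,Y_{j_1}\cdots Y_{j_e},$$
where $Y_k := f(z_{0k},\ldots,z_{(n-1)k}) = \sum_i a_i z_{ik}$. The right-hand side is precisely $g(Y_0,\ldots,Y_{m-1})$, which is the claimed identity.

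For the geometric interpretation, the identity exhibits $f\tens g$ as the pullback of $g$ along the linear map $L\colon\C^{mn}\to\C^{m}$ with coordinates $L(z)_k = Y_k$. Projectively, $L$ is a linear projection with indeterminacy locus the linear subspace $\{L=0\}\subset\PP^{mn-1}$, and the divisor cut out by $f\tens g$ is the cone on the divisor of $g\subset\PP^{m-1}$ with this subspace as vertex, i.e.\ the iterated linear join (in Lawson's terminology, the iterated suspension) of a linear space with the divisor of $g$. There is no genuine obstacle in the argument: once Lemma \ref{psilinearmonomial} has been unpacked, the proof is a routine exchange of summations, and only the index bookkeeping in identifying $Y_{j_1}\cdots Y_{j_e}$ with the expansion of $g(Y_0,\ldots,Y_{m-1})$ requires a little care.
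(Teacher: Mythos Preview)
Your proof is correct. It differs from the paper's in a small but genuine way: the paper expands $g$ in the last slot, recognizes $\Psi(f,\ldots,f,y_{j_1}\cdots y_{j_e})=f\tens(y_{j_1}\cdots y_{j_e})$, and then invokes the multiplicativity corollary $f\tens(g_1g_2)=(f\tens g_1)(f\tens g_2)$ (which in turn rests on the longer Proposition preceding it) to factor each term as $(f\tens y_{j_1})\cdots(f\tens y_{j_e})$. You instead expand $f$ in the first $e$ slots, apply Lemma~\ref{psilinearmonomial} directly, and regroup the double sum. Your route is slightly more elementary in that it is self-contained modulo Lemma~\ref{psilinearmonomial} and does not appeal to the biadditivity result; the paper's route has the advantage of making the substitution $y_j\mapsto f\tens y_j$ visible as a one-line consequence of multiplicativity. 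Both arrive at the same formula with comparable effort.
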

\begin{proof} Let $g= \sum a_{j_1\cdots j_e} y_{j_1}\cdots y_{j_e}$. Then,
\begin{multline*}
f \tens g = \Psi(f,\ldots,f,g)=
\Psi(f,\ldots,f,\sum a_{j_1\cdots j_e} y_{j_1}\cdots y_{j_e}) =\\
\sum a_{j_1\cdots j_e} \Psi(f,\ldots,f, y_{j_1}\cdots y_{j_e})=
\sum a_{j_1\cdots j_e} f \tens (y_{j_1}\cdots y_{j_e}) =\\
\sum a_{j_1\cdots j_e} (f \tens y_{j_1})\cdots (f \tens y_{j_e})
\end{multline*}
This last expression is exactly what we are looking for, it says that
we should substitute in the polynomial $g$ the variable $y_{j_i}$ with
the polynomial $f \tens y_{j_i}$ which in turn is equal to the
polynomial $f$ evaluated in the variables $z_{0j_i},\ldots,z_{(n-1)j_i}$
\end{proof}

We recall that there is a one to one correspondence between  homogeneous
polynomials in the variables $x_0,\ldots, x_s$ and the 
codimension $1$ algebraic cycles in $\P^s$. The correspondence is given in
the following way: If $f$ is a polynomial and it decomposes as a product
$f_1^{\alpha_1}\cdots f_t^{\alpha_t}$ where each $f_k$ is irreducible,
then the corresponding cycle $\Cyc(f)$ is given by $\sum \alpha_i V(f_i)$
where $V(f_i)$ is the (necessarilly irreducible) variety defined by the
polynomial $f_i$. The disjoint union of all codimension cycles $\Cyc^1(\P^s)$ forms a
monoid with respect to the formal addition of cycles. The following theorem
expresses the results of this section in terms of cycles.

\begin{theorem} There is an algebraic pairing $\tens$ in the space of codimension
$1$ cycles in projective space:
$$\tens : \Cic^1(\P^{n-1}) \times \Cic^1(\P^{m-1})\rightarrow
\Cic^1(\P^{mn-1}) $$
which satisfies the following properties
\begin{enumerate}
\item $\tens$ coincides with the tensor product $\otimes$ on linear cycles.
\item $\tens$ is biadditive: 
$$\eta_1 \eta_2 \tens \xi = \eta_1 \tens \xi + \eta_2 \tens \xi$$ and
$$\eta \tens \xi_1 \xi_2 = \eta \tens \xi_1 + \eta \tens \xi_2$$
\item $\tens$ stabilizes to a pairing 
$$\tens : \Cyc^1(\P^{\infty}) \times \Cyc^1(\P^{\infty})\rightarrow
\Cyc^1(\P^{\infty}) $$
\end{enumerate}
\end{theorem}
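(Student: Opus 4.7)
My plan is to assemble the theorem directly from the polynomial pairings already in place. Four things need verification: the passage from polynomials to cycles (well-definedness and non-vanishing of $f\tens g$), and the three numbered properties.

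Every effective codimension-one cycle corresponds to a nonzero homogeneous polynomial modulo scalars. Since $\Psi_{de}$ is multilinear, rescaling $f$ by $\lambda$ rescales $f\tens g$ by $\lambda^e$, and symmetrically in $g$, so the assignment $\eta\tens\xi:=\Cyc(f\tens g)$ is well-defined on cycles provided $f\tens g\ne 0$ whenever $f,g\ne 0$. I would prove this non-vanishing via the Segre substitution $z_{jk}\mapsto x_jy_k$: a direct monomial calculation using only the defining formula for $\Psi_{de}$ gives
$$\Psi_{de}(f_1,\ldots,f_e,g_1,\ldots,g_d)|_{z_{jk}=x_jy_k}=f_1(x)\cdots f_e(x)\cdot g_1(y)\cdots g_d(y),$$
so in particular $(f\tens g)|_{z_{jk}=x_jy_k}=f(x)^e g(y)^d\ne 0$.

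Property (1) follows by inspection: for $f=\sum a_i x_i$ and $g=\sum b_j y_j$ linear one has $f\tens g=\sum_{i,j}a_i b_j z_{ij}$, which is the Segre hyperplane classifying the tensor product of the two corresponding hyperplane line bundles. Property (2) is the cycle-level translation of the two multiplicativity corollaries $(f_1f_2)\tens g=(f_1\tens g)(f_2\tens g)$ and $f\tens(g_1g_2)=(f\tens g_1)(f\tens g_2)$, together with the fact that multiplication of defining polynomials corresponds to formal addition of effective divisors.

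For stabilization (3), the inclusions $\C[x_0,\ldots,x_{n-1}]_d\hookrightarrow\C[x_0,\ldots,x_n]_d$ send the lex-ordered monomial basis into the larger one, and an analogous compatibility holds on the $z$-side, so $\Psi_{de}$ commutes with extension of variables. This polynomial inclusion is precisely algebraic suspension of cycles, so direct-limiting over $n,m$ and $d,e$ yields the pairing on $\Cic^1(\P^\infty)$. The place that I expect to require the most care, and which I would view as the principal technical point, is identifying this polynomial stabilization with the geometric suspension on the target $\P^{nm-1}$; once this identification is set up (an application of Lemma \ref{tensorlinearform} to the newly adjoined coordinate), everything else is bookkeeping once combined with the non-vanishing computation above.
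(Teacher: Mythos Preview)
Your proposal is correct and follows the same overall strategy as the paper, which presents this theorem as a summary of the preceding proposition, the two biadditivity corollaries, and Lemma~\ref{tensorlinearform}, without a separate proof. You in fact supply more than the paper does: the non-vanishing argument via the Segre substitution $z_{jk}\mapsto x_jy_k$ (yielding $(f\tens g)|_{z_{jk}=x_jy_k}=f^e g^d$) is not in the paper and is a clean way to guarantee that the pairing lands in genuine cycles; the paper leaves this point implicit. Your treatment of stabilization is likewise more explicit than the paper's, and your identification of the compatibility between polynomial inclusion and iterated suspension on the target as the one place requiring care is accurate---the paper simply asserts stabilization without comment.
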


Since the pairing is biadditive, it induces a pairing in the group
completion 
$$\tens : \ZZ^1(\P^{\infty}) \times \ZZ^1(\P^{\infty})\rightarrow
\ZZ^1(\P^{\infty}) $$

In order to get a commutative diagram we construct
the associated reduced pairing $\rtens$:
$$\rtens(\eta,\xi):= \eta \tens \xi + \eta \tens \xi_0 + \eta_0 \tens \xi$$
where $\xi_0$ and $\eta_0$ are two fixed hyperplanes.

\begin{theorem} \label{diagramlinetensor} The following diagram commutes
$$\xymatrix{
  \GG^1(\PP^{\infty}) \times \GG^1(\PP^{\infty}) \ar[r] \ar[d]_{i} & \GG^1(\PP^{\infty}) \ar[d]^{i} \\
	\ZZ^1(\PP^{\infty}) \times \ZZ^1(\PP^{\infty}) \ar@{->}[r]^-{\rtens} & \ZZ^{1}
	(\PP^{\infty}) }$$
\end{theorem}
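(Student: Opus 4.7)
My plan is to verify commutativity by evaluating both composites on a pair of hyperplanes $(H_1,H_2)$ and comparing the results in $\ZZ^1(\PP^\infty)$ up to homotopy. First I unravel the bottom-left composite. Writing $\eta_0=\xi_0=H_0$ for the chosen basepoint hyperplane, the definition of $\rtens$ gives
$$\rtens(i(H_1),i(H_2))=H_1\tens H_2+H_1\tens H_0+H_0\tens H_2.$$
By property (1) of the pairing established in the previous theorem, $\tens$ agrees with the usual line-bundle tensor product $\otimes$ on linear cycles, so $H_1\tens H_2=H_1\otimes H_2=i(H_1\otimes H_2)$, which is precisely the top-right composite. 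What remains is to show that the two correction terms $H_1\tens H_0$ and $H_0\tens H_2$ contribute only a basepoint shift that is homotopically trivial.

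For this I invoke Lemma \ref{tensorlinearform}: each of these summands is an iterated linear suspension of a single hyperplane, so as cycles they are themselves hyperplanes in $\PP^{nm+n+m}$ whose defining linear form involves only a single column or row of the Segre coordinates $z_{ij}$. Passing to the group completion $\ZZ^1(\PP^\infty)$ and expanding the bilinear $\tens$ on the elements $c_1(H_i)=H_i-H_0\in\ZZ^1_0(\PP^\infty)$, the three-term reduced expression collapses to
$$\rtens(c_1(H_1),c_1(H_2))=H_1\tens H_2-H_0\tens H_0=H_1\otimes H_2-H_0\tens H_0.$$
Since $H_0\tens H_0$ is a single hyperplane, it is homotopic inside $\Cyc^1(\PP^\infty)\simeq\GG^1(\PP^\infty)=K(\Z,2)$ to the chosen basepoint $H_0$; hence the right side is homotopic to $H_1\otimes H_2-H_0=c_1(H_1\otimes H_2)$. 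Translating back through the shift relating $i$ and $c_1$ then yields the claimed commutativity of the diagram, and on fundamental classes recovers $c_1(L_1\otimes L_2)=c_1(L_1)+c_1(L_2)$.

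The main technical obstacle is the basepoint bookkeeping. The three-term expression defining $\rtens$ is not literally biadditive on effective cycles (nor is it degree-preserving), so identifying the two composites as homotopic maps into $\ZZ^1(\PP^\infty)$ requires passage to the group completion and use of the path-connectedness of $\GG^1(\PP^\infty)$ to homotope all single-hyperplane summands to the chosen basepoint. Once that bookkeeping is in place, the only remaining content is exactly property (1) from the previous theorem, namely that $\tens$ extends $\otimes$ on linear cycles.
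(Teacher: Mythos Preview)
The paper states this theorem without proof; commutativity is meant to follow immediately from property~(1) of the preceding theorem (that $\tens$ agrees with $\otimes$ on linear cycles) once the basepoint normalization built into $\rtens$ is unwound. Your second computation is exactly the right one and matches what the paper does later when it verifies the analogous diagram~\ref{firstdiagram} (and diagram~\ref{diagramprime}): expanding $\rtens(H_1-H_0,\,H_2-H_0)$ by bilinearity of $\tens$ gives $H_1\tens H_2-H_0\tens H_0=H_1\otimes H_2-H_0\otimes H_0$, and since $H_0\otimes H_0$ is a fixed hyperplane independent of $(H_1,H_2)$, this differs from $H_1\otimes H_2-H_0$ by translation by a constant element of $\ZZ^1$, hence by a homotopy equivalence. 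That is the entire content of the theorem.

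Your opening paragraph, however, does not work and should be deleted. If $i$ is the literal inclusion $H\mapsto H$, then the bottom-left composite sends $(H_1,H_2)$ to $H_1\otimes H_2+H_1\otimes H_0+H_0\otimes H_2$, a cycle of degree~$3$, while the top-right composite lands in degree~$1$. These lie in different path components of $\ZZ^1(\PP^\infty)\simeq\Z\times K(\Z,2)$, so no homotopy can relate them; the ``correction terms'' are not homotopically trivial in any sense. The diagram only commutes (up to homotopy) when the vertical maps are $H\mapsto H-H_0$, which is how the paper uses them in the introduction (there labeled $c_1$) and in diagram~\ref{firstdiagram} (there labeled $c$). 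Commit to that interpretation from the start, keep only your bilinear expansion, and drop the final ``translating back'' sentence, which is both vague and unnecessary.
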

It is proved in \cite{LM} that 
the space $\ZZ^1(\P^{\infty})$ splits as $\Z \times \ZZ_0^1(\P^{\infty})$,
where $\ZZ_0^1(\P^{\infty})$ is the subgroup of all cycles of degree zero.\\

Since we know that $\deg(\eta \tens \xi)=\deg(\eta)\deg(\xi)$ we only have
to calculate what happens with the pairing $\rtens$ when we restrict it to
cycles of degree $0$. Lawson proved that $\ZZ^1_0(\P^{\infty})$ is an
Eilenberg-Maclane space of type $K(\Z,2)$. Using this fact we will show
that the pairing $\tens$ restricted to the subgroup of cycles of
degree zero is nullhomotopic.

\begin{theorem} Any continuous biadditive pairing 
$$\tens: \ZZ^1_0(\P^{\infty}) \times \ZZ_0^1(\P^{\infty})
\rightarrow \ZZ^1_0(\P^{\infty})$$
is nullhomotopic.
\end{theorem}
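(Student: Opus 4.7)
The plan is to use the fact (due to Lawson) that $\ZZ^1_0(\P^{\infty})$ is a $K(\Z,2)$, and then observe that continuous biadditive maps are severely constrained by the primitivity condition they force on cohomology classes. Concretely, homotopy classes of continuous maps $K(\Z,2)\times K(\Z,2)\to K(\Z,2)$ are in bijection with $H^2(K(\Z,2)\times K(\Z,2);\Z)$, and by the Künneth formula this group is free abelian on the two fundamental classes $\iota_1,\iota_2$ pulled back from the two factors. So any such pairing $\tens$ is classified up to homotopy by some element $a\iota_1+b\iota_2$.

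Next I would exploit biadditivity. Consider the space $K(\Z,2)^3$ with coordinates $(x,x',y)$. Biadditivity in the first variable says that the two maps
\[
(x,x',y)\;\longmapsto\;(x+x')\tens y \qquad\text{and}\qquad (x,x',y)\;\longmapsto\;x\tens y \,+\, x'\tens y
\]
are homotopic. Pulling back the fundamental class $\iota$ through each, using that the H-space multiplication $\mu\colon K(\Z,2)^2\to K(\Z,2)$ satisfies $\mu^*\iota=\iota\otimes 1+1\otimes\iota$, the first map gives $a(\iota_x+\iota_{x'})+b\iota_y$ while the second gives $(a\iota_x+b\iota_y)+(a\iota_{x'}+b\iota_y)=a\iota_x+a\iota_{x'}+2b\iota_y$. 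Equality in $H^2(K(\Z,2)^3;\Z)$ forces $b=2b$, hence $b=0$. By the symmetric argument applied to biadditivity in the second variable, $a=0$ as well.

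Therefore the classifying class $a\iota_1+b\iota_2$ vanishes, and $\tens$ is nullhomotopic. The only subtle point, and the one I would flag as the main conceptual step rather than a genuine obstacle, is justifying the translation of biadditivity (an equality of continuous maps between topological abelian groups) into the cohomological identity above; this is immediate because $\ZZ^1_0(\P^{\infty})$ is a topological abelian group and all the relevant maps factor through its group multiplication, which represents the standard H-space structure on $K(\Z,2)$.
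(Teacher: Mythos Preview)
Your argument is correct. It differs from the paper's proof mainly in how the biadditivity hypothesis is cashed out. The paper observes that a biadditive map sends the wedge $\ZZ^1_0(\P^\infty)\vee\ZZ^1_0(\P^\infty)$ to $0$ and therefore factors through the smash product $K(\Z,2)\wedge K(\Z,2)$; since this smash has no cells below dimension $4$, its $H^2$ vanishes and the induced map is nullhomotopic. You instead classify the map by $a\iota_1+b\iota_2\in H^2(K(\Z,2)\times K(\Z,2);\Z)$ and use the full additivity identity on three variables to force $a=b=0$.

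Both routes are short; the paper's is a little slicker because it uses only the consequence $0\tens y=0=x\tens 0$ of biadditivity (equivalently, triviality on the wedge), whereas your computation invokes the stronger three-variable identity and the formula $\mu^*\iota=\iota\otimes 1+1\otimes\iota$. Your approach has the mild advantage of being entirely explicit at the level of cohomology classes, without needing to discuss the cell structure of the smash product; the paper's approach generalizes more transparently (any biadditive map into a space whose connectivity exceeds that of the smash is nullhomotopic). Either way, the key identification $\ZZ^1_0(\P^\infty)\simeq K(\Z,2)$ and the K\"unneth computation of $H^2$ do all the real work.
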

\begin{proof} Since the pairing is biadditive it factors through
the smash product 
$$\ZZ^1_0(\P^{\infty}) \wedge \ZZ^1_0(\P^{\infty}) \rightarrow
\ZZ_0^1(\P^{\infty})$$
homotopically this last function is equivalent to a map
$$K(\Z,2) \wedge K(\Z,2) \rightarrow
K(\Z,2)$$
Now, notice that $K(\Z,2) \wedge K(\Z,2)$ is a CW-complex with cells only
in dimension $4$ and higher, therefore, the pullback in the second 
cohomology groups of the 
fundamental class in $K(\Z,2)$ is zero.
\end{proof}

This theorem allows us to calculate the class pulled back via the
pairing $\rtens$.

\begin{corollary} Let $\rtens$ be the pairing
$$\rtens(\eta,\xi):= \eta \tens \xi + \eta \tens \xi_0 + \eta_0 \tens \xi$$
and let $i_2$ be the fundamental class in $H^2(\ZZ^1(\P^{\infty});\Z)$. Then
$$\rtens^*(i_2) = i_2 \otimes i_0 + i_0 \otimes i_2$$
\end{corollary}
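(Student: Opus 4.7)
The plan is to use the splitting $\ZZ^1(\PP^{\infty}) \simeq \Z \times \ZZ^1_0(\PP^{\infty}) = K(\Z,0) \times K(\Z,2)$ together with the preceding nullhomotopy theorem to expand $\rtens$ into pieces that can be analyzed separately.

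First I would write every cycle as $\eta = d\eta_0 + \alpha$ with $d = \deg(\eta) \in \Z$ and $\alpha \in \ZZ^1_0$, and similarly $\xi = e\xi_0 + \beta$ with $\beta \in \ZZ^1_0$. Using the biadditivity of $\tens$ from the previous theorem, I expand each of the three summands $\eta \tens \xi$, $\eta \tens \xi_0$, $\eta_0 \tens \xi$ appearing in the definition of $\rtens$ and collect like terms. The sum decomposes into four types of pieces: a multiple of the basepoint $\eta_0 \tens \xi_0$ (which contributes only to the degree coordinate of the splitting), two single-variable pieces of the form $\alpha \tens \xi_0$ and $\eta_0 \tens \beta$ carrying integer coefficients in $d$ and $e$, and a genuine biadditive term $\alpha \tens \beta$.

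Next I compute the cohomological contribution of each piece. The biadditive term $\alpha \tens \beta$ defines a continuous biadditive pairing $\ZZ^1_0(\PP^{\infty}) \times \ZZ^1_0(\PP^{\infty}) \to \ZZ^1_0(\PP^{\infty})$ and is therefore nullhomotopic by the preceding theorem, contributing $0$ to the cohomology pullback. By Lemma \ref{tensorlinearform} (applied directly for $\eta_0 \tens \beta$, and via the natural identification $\PP^{mn-1} \cong \PP^{nm-1}$ that swaps the roles of the two factors for $\alpha \tens \xi_0$), these maps are iterated suspensions of cycles and hence induce the identity on $H^2(\ZZ^1_0) = \Z \cdot i_2$. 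The integer coefficients multiplying these suspension maps are encoded, via the primitivity of $i_2$ under the H-space addition on $K(\Z,2)$, by the degree class $i_0 \in H^0(\ZZ^1)$ acting on the appropriate factor; they therefore contribute cohomology classes in $H^2(\ZZ^1) \otimes H^0(\ZZ^1)$ and $H^0(\ZZ^1) \otimes H^2(\ZZ^1)$.

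Assembling the surviving contributions gives $\rtens^*(i_2) = i_2 \otimes i_0 + i_0 \otimes i_2$. The main obstacle is the careful bookkeeping of the integer coefficients arising from the expansion of $\rtens$: one must verify that the specific asymmetric form of $\rtens$ (adding $\eta \tens \xi_0$ and $\eta_0 \tens \xi$ to the raw product $\eta \tens \xi$) produces degree-dependent coefficients that assemble cleanly into the class $i_0$ on each factor, rather than some shifted variant. The essential non-formal input, however, is the preceding nullhomotopy theorem, which is precisely what kills the $\alpha \tens \beta$ term that would otherwise produce an uncontrolled class in $H^2(\ZZ^1) \otimes H^2(\ZZ^1)$ and spoil the stated formula.
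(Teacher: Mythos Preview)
Your approach is correct and rests on the same two inputs as the paper: the preceding nullhomotopy theorem and the suspension description of Lemma~\ref{tensorlinearform}. The paper's argument is more direct, however: instead of decomposing $\eta=d\eta_0+\alpha$ and $\xi=e\xi_0+\beta$ and expanding, it simply treats the three summands of $\rtens$ as they stand, observing that the first summand $\eta\tens\xi$ is nullhomotopic by the previous theorem while the remaining two, $\eta\tens\xi_0$ and $\eta_0\tens\xi$, immediately pull $i_2$ back to $i_2\otimes i_0$ and $i_0\otimes i_2$. The extra layer of decomposition you introduce is exactly what generates the coefficient bookkeeping you flag as the main obstacle; the paper sidesteps that entirely.
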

\begin{proof} The formula follows at once from pulling back the last two summands
of the  pairing $\rtens$, since by the previous theorem the first summand is
nullhomotopic.
\end{proof}

\begin{corollary} Let $L_1$ and $L_2$ be two line bundles. Then
$$c_1(L_1 \otimes L_2) = c_1(L_1) + c_1(L_2)$$
where $c_1$ denotes the first chern class.
\end{corollary}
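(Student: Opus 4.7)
The plan is to derive the additivity formula for $c_1$ by pulling back the cohomological computation of $\rtens^*(i_2)$ obtained in the previous corollary along classifying maps of the line bundles. The two ingredients are Theorem \ref{diagramlinetensor} (the commutativity of $i \circ \otimes = \rtens \circ (i \times i)$) and the identification of $i: \GG^1(\PP^\infty) \hookrightarrow \ZZ^1(\PP^\infty)$ as the classifying map for $c_1$, i.e.\ $i^*(i_2) = \omega$.

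First I would fix classifying maps $g_1, g_2 \colon X \to \GG^1(\PP^\infty) = BU_1$ for $L_1$ and $L_2$, so that $c_1(L_j) = g_j^*(\omega)$. Since $\otimes \colon BU_1 \times BU_1 \to BU_1$ classifies the tensor product of line bundles, the bundle $L_1 \otimes L_2$ is classified by $\otimes \circ (g_1, g_2)$, and consequently
\[
c_1(L_1 \otimes L_2) \;=\; (g_1, g_2)^{*}\,\otimes^{*}\,i^{*}(i_2).
\]

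Next I would use the commutativity of the square in Theorem \ref{diagramlinetensor} to rewrite $\otimes^{*}\,i^{*} = (i \times i)^{*}\,\rtens^{*}$, giving
\[
c_1(L_1 \otimes L_2) \;=\; (g_1, g_2)^{*}\,(i \times i)^{*}\,\rtens^{*}(i_2).
\]
By the preceding corollary, $\rtens^{*}(i_2) = i_2 \otimes i_0 + i_0 \otimes i_2$. Pulling this class back through $i \times i$ and invoking $i^{*}(i_2) = \omega$ (together with the fact that $i_0$ pulls back to the unit under Künneth), I get $(i \times i)^{*}\,\rtens^{*}(i_2) = \omega \otimes 1 + 1 \otimes \omega$.

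Finally, $(g_1, g_2)^{*}(\omega \otimes 1 + 1 \otimes \omega) = g_1^{*}(\omega) + g_2^{*}(\omega) = c_1(L_1) + c_1(L_2)$, which is the desired identity. There is no real obstacle here: everything is a bookkeeping exercise in pulling classes back through a commutative square, and the only nontrivial input — that the ``diagonal'' term $\eta \tens \xi$ in $\rtens$ contributes nothing to $i_2$ — was absorbed into the previous corollary via the nullhomotopy argument on $K(\Z,2) \wedge K(\Z,2)$.
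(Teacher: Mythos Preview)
Your proof is correct and follows essentially the same approach as the paper: both use the commutativity of the diagram in Theorem \ref{diagramlinetensor}, the Lawson--Michelsohn identification $i^*(i_2)=\omega$, and the previous corollary's formula $\rtens^*(i_2)=i_2\otimes i_0 + i_0\otimes i_2$. The paper's version is simply a terse two-line sketch of exactly the pullback chase you have written out in full.
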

\begin{proof}
Lawson and Michelsohn proved in \cite{LM} that the inclusion $i$
in theorem \ref{diagramlinetensor} classifies the chern class of the
universal quotient bundle. Therefore the formula follows from the previous
corollary.
\end{proof}

                            %
\section{Topological Obstruction for a General Pairing}

The pairing constructed in the last
section might be considered as a hint for a pairing in higher 
codimensions. We will prove that the  is a topological obstruction
for the existence of such a pairing. The general strategy is to
factor the inclusion of the grassmannian 
$\GG^p(\P^n)$ into the space $\ZZ^p(\P^n)$ of all codimension $p$
cycles. This inclusion factors through the free abelian  group 
$\Z \GG^p(\P^n)$ generated by the points of the grassmannian. The
existence of this factorization and the chern class formula for
the tensor product of bundles will yield a contradiction if we
assume the existence of a pairing. \\

Let us start by observing that the Dold-Thom theorem implies that
the free abelian group $\Z \GG^1(\P^n)$ is homotopically equivalent
to the product $\prod_{i=0}^n K(\Z,2i)$. Also, if we consider the
subgroup $\Z_0 \GG^1(\P^n)$ which is the kernel of the degree 
homomorphism i.e. the subgroup of $0$-dimensional cycles of 
degree $0$, we get the following homotopy equivalence
$$
\Z_0 \GG^1(\P^n) \simeq \prod_{i=1}^n K(\Z,2i)
$$        

Dold and Thom also proved that the inclusion 
$$i: \GG^1(\P^n) \hookrightarrow \Z_0 \GG^1(\P^n)$$
induces the Hurewicz map when the $\pi_i$ functors are applied. The
next theorem calculates the class pulled back in cohomology by the
inclusion $i$.

\begin{theorem}\label{huerewicz} The Hurewicz map
$$h: \GG^1(\P^n) \hookrightarrow \Z \GG^1(\P^n) \simeq \prod_{i=0}^n K(\Z,2i)$$
induces the following map in cohomology
$$h^*(i_{2k}) = \omega^k$$
where $i_{2k}$ is the generator of $H^{2k}(K(\Z,2k),\Z)$ and $\omega$ is the
generator of $H^2(\GG^1(\P^n),\Z)$.
\end{theorem}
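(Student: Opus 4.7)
My plan is to prove $h^*(i_{2k})=\omega^k$ in two stages: first reducing to a linear subspace $\P^k\subset\P^n$ by naturality, then computing the resulting pairing $\P^k\to K(\Z,2k)$ via the Pontryagin/Hopf algebra structure on $H_*(\Z\GG^1(\P^k);\Q)$ supplied by Dold-Thom.

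The case $k=1$ is direct: since $\P^n$ is simply connected, Hurewicz gives $\pi_2(\P^n)\cong H_2(\P^n)\cong\Z$, and by Dold-Thom this identifies with $\pi_2(\Z\GG^1(\P^n))$ with $h$ inducing the identification. Composing with the projection $p_1$ to the $K(\Z,2)$-factor yields an iso on $\pi_2$, so it classifies a generator of $H^2(\P^n;\Z)$, which we use to fix the sign of $\omega$.

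For general $k$, write $h^*(i_{2k})=m\omega^k$ and determine $m$ by restriction along a linear $\iota:\P^k\hookrightarrow\P^n$. By functoriality of the free abelian group construction, this restriction factors as $\P^k\xrightarrow{h_k}\Z\GG^1(\P^k)\xrightarrow{\Z[\iota]}\Z\GG^1(\P^n)\xrightarrow{p_k}K(\Z,2k)$; since $\iota_*$ is an iso on $H_{2j}$ for $j\leq k$, the last two maps compose (up to homotopy) to the top Dold-Thom projection $q_k:\Z\GG^1(\P^k)\to K(\Z,2k)$. It thus suffices to check that $q_{k*}(h_{k*}[\P^k])$ generates $H_{2k}(K(\Z,2k);\Z)$. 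I would invoke the Milnor-Moore-type identification $H_*(\Z[X];\Q)\cong\mathrm{Sym}^*(\tilde H_*(X;\Q))$, a consequence of Dold-Thom together with the topological abelian monoid structure on $\Z[X]$; under this isomorphism, $h_{k*}$ is the inclusion of generators, so $h_{k*}[\P^k]$ is the primitive element corresponding to $\tilde H_{2k}(\P^k;\Q)$. The Dold-Thom projection $q_k$ sends this primitive onto the Hurewicz generator of $H_{2k}(K(\Z,2k);\Q)$, while killing all decomposables and other primitives. So the pairing equals $\pm1$ rationally, and torsion-freeness of $H^{2k}(\P^k;\Z)=\Z$ upgrades this to $m=1$ (with the sign fixed by the $k=1$ choice).

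The main obstacle is precisely this Pontryagin/Hopf algebra identification. Without it, one confronts the large group $H^{2k}(\prod_{j=0}^k K(\Z,2j);\Z)$ arising from Künneth, in which $i_{2k}$ sits alongside many other classes in the same degree (for instance $\iota_2^k$ from the $K(\Z,2)$-factor, plus all intermediate mixed products indexed by partitions of $k$); isolating the contribution of the top $K(\Z,2k)$-factor to $h^*$ requires exactly the primitive/decomposable decomposition that the Hopf algebra structure provides, and verifying the integrality of the rational calculation is the subtlest point.
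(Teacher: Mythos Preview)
Your strategy via the Hopf algebra structure on $H_*(\Z\GG^1(\P^k);\Q)$ is genuinely different from the paper's, but the central step contains an error. The claim that $h_{k*}[\P^k]$ is primitive is false for $k\ge 2$: since $h_k:\P^k\to\Z\P^k$ commutes with diagonals, $h_{k*}$ is a coalgebra map, and the cup-product structure on $H^*(\P^k)$ forces
\[
\bar\Delta\bigl(h_{k*}[\P^k]\bigr)=\sum_{0<i<k} h_{k*}[\P^i]\otimes h_{k*}[\P^{k-i}]\ne 0.
\]
What \emph{is} true is that the classes $h_{k*}[\P^i]$ are algebra generators of $H_*(\Z\P^k;\Q)$, so $h_{k*}[\P^k]$ projects to a generator of the indecomposables in degree $2k$; but the coalgebra structure alone does not determine its primitive component, and that coefficient is exactly the number you need. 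You can repair the argument by pairing against a primitive cohomology class (which annihilates decomposables), but then you must verify separately that $q_k^*(i_{2k})$ is primitive and pairs to $\pm 1$ with the primitive homology generator in degree $2k$ --- this hinges on $q_k$ being an $H$-map inducing an isomorphism on $\pi_{2k}$, precisely the splitting ambiguity you flag at the end.

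The paper sidesteps all of this with a short induction on $n$. The cofibre sequence $\P^n\hookrightarrow\P^{n+1}\to S^{2(n+1)}$ becomes, under the free abelian group functor, a quasifibration (Dold--Thom), which supplies a \emph{canonical} projection $\Z\P^{n+1}\to\Z S^{2(n+1)}=K(\Z,2(n+1))$ onto the top factor. The inductive step then reduces to the Hurewicz map $S^{2(n+1)}\to K(\Z,2(n+1))$, which is immediate. This produces the splitting and the identification of $i_{2(n+1)}$ simultaneously, with no Hopf-algebra bookkeeping and no rational-to-integral passage.
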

\begin{proof} By induction on $n$. \\
{\bf Base:} The case $n=1$ is a result of Lawson and Michelsohn in \cite{LM}. Namely,
they prove that the inclusion $i: \GG^1(\P^1) \hookrightarrow \ZZ^1(\GG^1(\P^1))$ classifies
the total chern class of the universal quotient bundle, i.e. that $i \simeq 1 \times
\omega$. But in this case $\ZZ^1(\G^1(\P^1)) = \Z \G^1(\P^1)$ and $i$ is the 
Hurewicz map $h$. Thus $h \simeq 1 \times \omega$. \\
{\bf Inductive Step:} Notice that $\GG^1(\P^n) = {\P^n}^{\spcheck} \cong
\P^n$ so we will substitute throughout $\GG^1(\P^n)$ with $\P^n$
Suppose that $h^*(i_{2k})=\omega^k$ for $h: \P^n \hookrightarrow \Z \P^n$. The
inclusion of $\P^n \hookrightarrow \P^{n+1}$ is a cofibration and the quotient
$\P^{n+1} / \P^n$ is homeomorphic to the sphere $\S^{2(n+1)}$. Dold and Thom proved
in \cite{DT} that a cofibration sequence induces a quasifibration sequence 
when taking the free abelian group funtor. Hence we have the following 
commutative diagram:
$$
\xymatrix{ 
\P^n \ar@{^{(}->}[r]^j \ar[d]_h  & \P^{n+1} \ar[d]^h \ar[r] & \S^{2(n+1)} \ar[d]^ h \\
\Z \P^n \ar@{^{(}->}[r]   & \Z \P^{n+1} \ar[r]^p & \Z \S^{2(n+1)} 
} 
$$
where $j$ is a cofibration, $p$ is a quasifibration and each $h$ is the 
corresponding Hurewicz map. This diagram is homotopically equivalent to the following
$$
\xymatrix{ 
\P^n \ar@{^{(}->}[r]^j \ar[d]_h  & \P^{n+1} \ar[d]^h \ar[r] & \S^{2(n+1)} \ar[d]^ h \\
\prod_{i=0}^n K(\Z,2i) \ar@{^{(}->}[r]   & \prod_{i=0}^{n+1} K(\Z,2i) \ar[r]^{\pi_{n+1}} &
 K(\Z,2(n+1)) 
} 
$$
The induction hypothesis implies that the vertical arrow on the left satisfies the condition
$h^*(i_{2k})=\omega^k$. Therefore we are only concerned with what happens to
the pullback of $i_{2(n+1)}$ in the middle vertical arrow, but this is determined by the Hurewicz map on the far right
of the diagram. 
\end{proof}
\begin{theorem} \label{nonexistence}
For $p > 1$ there is no continuous biadditive pairing
$$ \rtens: \ZZ^1(\P^n) \times \ZZ^p(\P^m) \rightarrow \ZZ^p(\P^{nm +n +m})$$
such that $\eta \rtens \xi = \eta \otimes \xi$ where $\eta$ and $\xi$ are
linear spaces and $\otimes$ is the map which classifies the tensor product
of bundles via the universal quotient bundle.
\end{theorem}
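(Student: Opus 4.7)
The plan is to suppose such a pairing $\rtens$ exists and derive a cohomological contradiction by playing biadditivity against the chern class formula~(\ref{one}). Since $\deg(\eta\rtens\xi)=\deg(\eta)\deg(\xi)$, biadditivity forces $\rtens$ to restrict to a pairing of degree-zero subgroups
\[
\rtens:\ZZ^1_0(\P^n)\times\ZZ^p_0(\P^m)\longrightarrow\ZZ^p_0(\P^{nm+n+m}).
\]
Choosing $n,m$ sufficiently large, Lawson's theorem identifies these spaces with $K(\Z,2)$ and $\prod_{j=1}^{p} K(\Z,2j)$; their rational cohomology rings are the polynomial rings $\Q[u]$ and $\Q[v_2,\dots,v_{2p}]$ on the fundamental classes $u$ in degree $2$ and $v_{2j}$ in degree $2j$.

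Biadditivity implies $\rtens(0,\xi)=\rtens(\eta,0)=0$, so $\rtens$ factors through the smash product $\ZZ^1_0\wedge\ZZ^p_0$. Consequently every K\"unneth summand of $\rtens^*(i_{2k})$ has the form $u^a\otimes M$ with $a\geq 1$ and $M$ a monomial of positive total degree in $v_2,\dots,v_{2p}$: the ``pure'' components along either axis of $\ZZ^1_0\times\ZZ^p_0$ must vanish. On the other hand, the compatibility hypothesis, together with the Lawson--Michelsohn identification of the Hurewicz inclusions $\GG^1\hookrightarrow\ZZ^1_0$ and $\GG^p\hookrightarrow\ZZ^p_0$ as classifying the chern classes (so $u\mapsto\omega=c_1(L)$ and $v_{2j}\mapsto c_j(E)$, compare Theorem~\ref{hurewicz}), forces the pullback of $\rtens^*(i_{2k})$ along $\GG^1\times\GG^p\hookrightarrow\ZZ^1_0\times\ZZ^p_0$ to equal the chern class expansion
\[
c_k(E\otimes L)=\sum_{j=0}^{k}\binom{p-j}{k-j}\,c_j(E)\,\omega^{k-j}.
\]

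Under the Hurewicz ring maps, each cross term $u^a\otimes M$ with $a\geq 1$ and $M$ of positive degree maps to $\omega^a\otimes\bar M$, again a cross term in powers of $\omega$ and polynomials of positive degree in the $c_j$'s. Hence the pullback of $\rtens^*(i_{2k})$ cannot contain a summand that is a pure power of $\omega$ alone nor one that is a pure polynomial in the $c_j$'s alone. Taking $k=2$ with $n\geq 2$ and $m\geq p$, however, the $j=0$ summand of the chern class expansion is the pure power $\binom{p}{2}\omega^2$, whose coefficient is nonzero for $p\geq 2$. This directly contradicts the cross-term rigidity imposed by biadditivity, so no such $\rtens$ can exist.

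The main obstacle is tracking the K\"unneth decomposition faithfully through the Hurewicz pullback: one needs the vanishing of pure summands coming from biadditivity to persist after restricting to $\GG^1\times\GG^p$, and one needs at least one genuine pure term in the expansion of $c_k(E\otimes L)$ to survive to a nonzero class on the grassmannians. Rationally both issues are transparent, since both $H^*(\ZZ^p_0;\Q)$ and $H^*(\GG^p;\Q)$ are polynomial rings and the chern class classification induces an isomorphism in the degree range required by the expansion of $c_2(E\otimes L)$.
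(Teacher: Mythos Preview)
Your argument has a genuine gap at the step where you assert that the pullback of $\rtens^*(i_{2k})$ along $c\times c:\GG^1\times\GG^p\hookrightarrow\ZZ^1_0\times\ZZ^p_0$ equals $c_k(E\otimes L)$. It does not. The inclusions $c$ send $L\mapsto L-L_0$ and $E\mapsto E-E_0$, so the composite $\rtens\circ(c\times c)$ is
\[
(L,E)\longmapsto (L-L_0)\rtens(E-E_0)=L\otimes E - L\otimes E_0 - L_0\otimes E + L_0\otimes E_0,
\]
using biadditivity and the hypothesis on linear cycles. Since $i_{2k}$ is primitive, its pullback is the alternating sum of the four Chern classes; for $p=2$ and $k=2$ one gets
\[
c_2(L\otimes E)-c_2(L\otimes E_0)-c_2(L_0\otimes E)+0
=\bigl[c_1(L)^2+c_1(L)c_1(E)+c_2(E)\bigr]-c_1(L)^2-c_2(E)=c_1(L)c_1(E),
\]
a pure cross term. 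The pure summands you were hoping to exploit have cancelled, exactly as your smash-product observation predicts. As a sanity check, your argument taken at face value would also exclude the $p=1$ case (already $c_1(L_1\otimes L_2)=c_1(L_1)+c_1(L_2)$ has pure terms), contradicting the pairing constructed in Theorem~\ref{pairing_divisors}.

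This is precisely why the paper replaces $\rtens$ by the corrected map $\mu(\eta,\xi)=\eta\rtens\xi+\eta_0\rtens\xi+\eta\rtens\xi_0$, which \emph{does} make the square with $\otimes$ commute but is no longer biadditive, so no smash-product rigidity applies to it. The actual obstruction is obtained by inserting the intermediate factor $\Z_0\GG^1(\P^n)\simeq\prod_{i=1}^n K(\Z,2i)$ between $\GG^1$ and $\ZZ^1_0\simeq K(\Z,2)$: the restriction $\rho$ of $\mu$ to this larger domain is shown (via the doubling trick of Proposition~\ref{trick}) to satisfy $\rho^*(i_4)=1\otimes i_4+i_2\otimes i_2+i_4\otimes 1$, and the term $i_4\otimes 1$ cannot lie in the image of $(\pi_1\times\mathrm{id})^*$ since $\pi_1:\prod_i K(\Z,2i)\to K(\Z,2)$ kills $i_4$. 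That last factorization through $\Z_0\GG^1$, together with the computation distinguishing $i_4$ from $i_2^2$, is the missing ingredient.
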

\begin{proof} Suppose that such a pairing exists. Then it must necessarilly
satisfy the following relation in the degrees 
$$\deg(\eta \rtens \xi)= \deg(\eta)\deg(\xi)$$
This is because it is biadditive and continuous and it maps the degree one
effective cycles into the degree one effective cycles. Thus it induces a 
continuous pairing in the subgroup $\ZZ_0$ of cycles of degree zero:
$$\ZZ_0^1(\P^n) \times \ZZ^p_0(\P^m) \rightarrow \ZZ_0^1(\P^{nm+n+m})$$
Let $\mu:\ZZ_0^1(\P^n) \times \ZZ^p(\P^m) \rightarrow \ZZ_0^1(\P^{nm+n+m})$ 
be the function defined by
$$\mu(\eta,\xi)= \eta \rtens \xi + \eta_0 \rtens \xi + \eta \rtens \xi_0$$
Then the following diagram commutes:
\begin{equation}\label{firstdiagram}\xymatrix{
\GG^1(\P^n) \times \GG^p(\P^m) \ar[r]^{\otimes} \ar[d]_c & \GG^p(\P^{nm+n+m}) \ar[d]^c \\
\ZZ_0^1(\P^n) \times \ZZ^p_0(\P^m) \ar[r]^{\mu}  & \ZZ_0^p(\P^{nm+n+m})
}
\end{equation}
where the vertical maps are the inclusions mapping a linear space $\eta$ into
$\eta -\eta_0$ where $\eta_0$ is a fixed subspace. Lawson and Michelsohn
proved in \cite{LM} that this inclusion classifies the total chern class
map of the universal quotient bundle. Now, notice that we can restrict
the pairing $\mu$ on the first factor to the subspaces $\Z_0\GG^1$ of 
cycles generated by the points of
the grassmannian, that is, to the cycles which are formal
sums of linear hypersurfaces with coefficients adding up to zero. Let $\rho$ be 
the restriction, then we have the following commutative diagram
$$\xymatrix{
\GG^1(\P^n) \times \GG^p(\P^m) \ar[r]^-{\otimes} \ar[d]_{i \times c} & 
\GG^p(\P^{nm+n+m}) \ar[d]^c \\
\Z_0\GG^1(\P^n) \times \ZZ_0^p(\P^m) \ar[r]^-{\rho} \ar[d]_{j \times id} & 
\ZZ_0^p(\P^{nm+n+m}) \ar[d]^{id} \\
\ZZ_0^1(\P^n) \times \ZZ_0^p(\P^m) \ar[r]^-{\mu} & \ZZ_0^p(\P^{nm+n+m})
}
$$
where $i$ is the same map as before, $i(L)=L-L_0$ and $j$ is just the
natural inclusion. The previous theorem gives a description of what
$i$ and $j$ are in terms of the homotopy equivalences with the products
of Eilenberg-Maclane spaces, namely $i$ is the Hurewicz map and $j$ is
the projection onto the first factor. Hence we have the following 
diagram:
\begin{equation}\label{seconddiagram}\xymatrix{
\GG^1(\P^n) \times \GG^p(\P^m) \ar[r]^-{\otimes} \ar[d]_{h \times c} & 
\GG^p(\P^{nm+n+m}) \ar[d]^c \\
\prod_{i=1}^n K(\Z,2i) \times \prod_{i=1}^p K(\Z,2i) \ar[r]^-{\rho} \ar[d]_{\pi_1 \times id} & 
\prod_{i=1}^{nm+n+m} K(\Z,2i) \ar[d]^{id} \\
K(\Z,2) \times \prod_{i=1}^p K(\Z,2i) \ar[r]^-{\mu} & \prod_{i=1}^{nm+n+m} K(\Z,2i) 
}
\end{equation}
We will prove the case $p=2$, the general proof is analogous. The chern class formula for the tensor
product of a line bundle and a $2$-dimensional bundle yields:
$$c_2(L \otimes E) = c_1^2(L) +c_1(E)c_1(L)+c_2(E)$$
The vertical arrows in the diagram  \ref{firstdiagram} induce isomorphisms in
rational cohomology. So the chern class formula implies that in the $4$-th 
cohomology groups $\rho$ should induce the following map:
$$
\rho^*(i_4)=1 \otimes i_4 + i_2 \otimes i_2 + a i_4 \otimes 1 + b i_2^2 \otimes 1
$$
where each $i_k$ is the generator of $H^k(K(\Z,2k);\Q)$ and $a+b=1$. \\
 
We claim that $a=1$ and $b=0$. This claim is the content of
proposition \ref{trick}. The argument to prove
the theorem is then the following:\\
 
The existence of the product $\rtens$ implies the existence
of the function $\mu$ which in turn implies the existence of the restriction
$\rho$. But then, the claim implies that the diagram \ref{seconddiagram} cannot
commute! \\

This is because diagram \ref{seconddiagram} implies that
$$\rho^*(i_4)= (\pi_1 \times id)^*\mu^*(i_4)$$
but there is no element in $H^4(K(\Z,2) \times \prod_{i=1}^2 K(\Z,2i);\Q)$ 
which gets pulled back to
$i_4 \otimes 1$ in $H^4(\prod_{i=1}^n K(\Z,2i) \times \prod_{i=1}^2 K(\Z,2i);\Q)$ 
because $\pi_1$ is the projection into the first factor:
$$\pi_1: \prod_{i=1}^n K(\Z,2i) \rightarrow K(\Z,2)$$
therefore we can only pullback elements of the form $a i_2^p \otimes j$ where
$i_2$ is the generator of 
$$H^*(K(\Z,2);\Q)=\Q[i_2] \subset H^*(\prod_{i=1}^n K(\Z,2i);\Q)=\Q[i_2,\ldots,i_{2n}]$$
(These last equalities being a classical result of Serre).


\end{proof}

Now we prove the claim mentioned in theorem \ref{nonexistence}

\begin{proposition} \label{trick}
Using the notation of theorem \ref{nonexistence} we have the formula
$$
\rho^*(i_4)=1 \otimes i_4 + i_2 \otimes i_2 + i_4 \otimes 1
$$
\end{proposition}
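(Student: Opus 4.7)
My plan is to pin down the coefficients in the ansatz $\rho^*(i_4) = 1 \otimes i_4 + i_2 \otimes i_2 + a(i_4 \otimes 1) + b(i_2^2 \otimes 1)$ (with $a + b = 1$ forced by the Chern-class formula) by computing $a$ and $b$ individually.

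First I would pull $\rho^*(i_4)$ back through $(h \times c)^*$ from diagram (\ref{seconddiagram}) and match against the classical formula
\[
c_2(L \otimes E) = c_2(E) + c_1(L) c_1(E) + c_1(L)^2.
\]
Using $h^*(i_{2k}) = \omega^k$ from Theorem \ref{hurewicz} and the fact that $c$ classifies the total Chern class of the universal quotient bundle, this fixes the coefficients of $1 \otimes i_4$ and $i_2 \otimes i_2$ to be $1$ and rules out any $1 \otimes i_2^2$ contribution, while leaving the single constraint $a + b = 1$ on the remaining terms (since both $i_4 \otimes 1$ and $i_2^2 \otimes 1$ pull back to $\omega^2 \otimes 1$ in $H^4(\GG^1(\P^n); \Q)$).

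Second, to separate $a$ from $b$, I would invoke Theorem \ref{rationalcalculation}, which computes the rational cohomology of the explicit pairing $\tens$ constructed in Theorem 1.3. Specializing to $p = 2$, $k = 2$ the formula reads
\[
\tens^*(i_4) = \binom{2}{2}(1 \otimes i_4) + \binom{1}{1}(i_2 \otimes i_2) + \binom{0}{0}(i_4 \otimes 1),
\]
so the ``pure first-factor'' contribution lives entirely in $i_4 \otimes 1$ (with coefficient $1$) rather than being split with $i_2^2 \otimes 1$. Since the hypothetical $\rho$ and the swap of the constructed $\tens$ are both pairings restricting to the tensor product of bundles on $\GG^1(\P^n) \times \GG^p(\P^m)$, I would argue that they agree rationally in degree $4$ on the $K(\Z,4)$-factor of $\Z_0\GG^1(\P^n)$, which is precisely where the $i_2^2 \otimes 1$ ambiguity dies, forcing $a = 1$ and $b = 0$.

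The main obstacle is this last comparison step. I expect to carry it out by restricting $\rho$ along the inclusion $\iota: K(\Z,4) \hookrightarrow \Z_0\GG^1(\P^n)$ of the relevant Eilenberg-MacLane factor of the Dold-Thom splitting. Then $\iota^*(i_2) = 0$ and $\iota^*(i_2^2) = 0$, so the restricted pullback reduces to $1 \otimes i_4 + a(i_4 \otimes 1)$. Realizing the Dold-Thom generator of this factor by the image of the fundamental class $[\P^2] \in H_4(\P^n)$, viewed as an explicit two-parameter family of formal sums of hyperplanes coming from $\P^2 \hookrightarrow \P^n$, and feeding it into the algebraic construction of $\rtens$ from Section 2 (extended to codimension $p$ via Theorem 1.3) allows direct computation of $a$, which must equal $1$ by comparison with the corresponding calculation underlying Theorem \ref{rationalcalculation}.
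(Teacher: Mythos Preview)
Your second step has a genuine gap: you are conflating two different maps. Theorem \ref{rationalcalculation} computes the cohomology of the \emph{explicitly constructed} pairing $\tens$ of Theorem 1.3, whereas the $\rho$ in Proposition \ref{trick} is the restriction of the \emph{hypothetical} pairing $\rtens$ assumed (for contradiction) to exist in Theorem \ref{nonexistence}. These have the same domain and both restrict to $\otimes$ on $\GG^1 \times \GG^p$, but there is no a priori reason they should agree on all of $\Z_0\GG^1 \times \ZZ^p_0$; indeed, you yourself observed that the Grassmannian restriction only pins down $a+b=1$. Your attempted rescue---restricting to the $K(\Z,4)$ factor and ``feeding it into the algebraic construction of $\rtens$''---fails for the same reason: the hypothetical $\rtens$ has no algebraic construction (its non-existence is the theorem being proved), and you cannot simply substitute Theorem 1.3's $\tens$ for it without first proving they coincide there, which is exactly the missing step.

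The paper separates $a$ from $b$ by a different device that uses only the biadditivity of $\rho$ and its restriction to $\otimes$, both of which the hypothetical pairing has by assumption. One maps $\GG^1(\P^n)\times\GG^1(\P^n)$ into $\Z_0\GG^1(\P^n)$ via $\phi(L_1,L_2)=(L_1-L_0)+(L_2-L_0)$. By Theorem \ref{hurewicz}, $\phi^*(i_2)=\omega_1+\omega_2$ and $\phi^*(i_4)=\omega_1^2+\omega_2^2$, so that $\phi^*(i_2^2)=\omega_1^2+2\omega_1\omega_2+\omega_2^2$ carries a cross term $2\omega_1\omega_2=2c_1(L_1)c_1(L_2)$ which $\phi^*(i_4)$ does not. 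On the other side of the diagram, biadditivity gives $\rho'(\phi(L_1,L_2),E-E_0)$ as the sum of the two separate tensor products, and the Chern class formula applied to $L_1\otimes E$ and $L_2\otimes E$ individually produces no $c_1(L_1)c_1(L_2)$ term. Matching forces $b=0$, hence $a=1$. This ``doubling'' trick is what actually distinguishes $i_4\otimes 1$ from $i_2^2\otimes 1$, and it works for the hypothetical $\rho$ without any appeal to Theorem \ref{rationalcalculation}.
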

\begin{proof} The chern class formula for the tensor product of bundles and the
commutativity of \ref{seconddiagram} implies that
$$
\rho^*(i_4)=1 \otimes i_4 + i_2 \otimes i_2 + a (i_4 \otimes 1) + b (i_2^2 \otimes 1)
$$
with $a+b=1$. Consider the diagram 
\begin{equation} \label{diagramprime}
\xymatrix{
[\GG^1(\P^n) \times \GG^1(\P^n)] \times \GG^2(\P^m) \ar[r]^-{\otimes_1 \times \otimes_2}
\ar[d]_{\phi \times c} & \GG^2(\P^{mn+n+m}) \times \GG^2(\P^{mn+n+m})  \ar[d]^{c + c} \\
\Z_0 \GG^1(\P^n) \times \ZZ^2_0(\P^m) \ar[r]^-{\rho'} & \ZZ^2_0(\P^{nm+n+m})
}
\end{equation}
where 
\begin{itemize}
\item $\phi:\GG^1(\P^n) \times \GG^1(\P^n)] \rightarrow \Z_0 \GG^1(\P^n)$ is
given by  
$$\phi(L_1,L_2)=(L_1-L_0) + (L_2 - L_0)$$ 
where $L_0$ is a fixed linear space. 
\item $\otimes_1 \times \otimes_2$ is given by
$$(\otimes_1 \times \otimes_2)(L_1,L_2,E)= (L_1 \otimes E, L_2 \otimes E)$$
\item $c + c$ is given by
$$(c + c)(E_1,E_2)= (E_1 - L_0 \otimes E_0) + (E_2 - L_0 \otimes E_0)$$
\item $\rho'= \rho + \tau$ where
$$\tau(\eta,\xi)= L_0 \rtens \xi$$
\end{itemize}

We will verify that diagram \ref{diagramprime} commutes:
\begin{multline*}
(c+c)(\otimes_1 \times \otimes_2)(L_1,L_2,E)=
(c+c)(L_1 \otimes E, L_2 \otimes E) = \\
(L_1 \otimes E - L_0 \otimes E_0) + (L_2 \otimes E - L_0 \otimes E_0) 
\end{multline*}
On the other hand:
\begin{multline*}
\rho'(\phi \times c)(L_1,L_2,E)=
\rho'((L_1-L_0) + (L_2 -L_0),E-E_0) = \\
\rho((L_1-L_0) + (L_2 -L_0),E-E_0)) + \tau((L_1-L_0) + (L_2 -L_0),E-E_0)= \\
\rho((L_1-L_0) + (L_2 -L_0),E-E_0)) + L_0 \rtens (E-E_0) = \\
((L_1-L_0) + (L_2 -L_0)) \rtens (E-E_0) +2( L_0 \rtens(E-E_0)) +
((L_1-L_0) + (L_2 -L_0)) \rtens E_0 =\\
L_1 \rtens E + L_2 \rtens E  -2(L_0 \rtens E_0)
\end{multline*}

Now recall that the space $\ZZ^2_0(\P^{nm+n+m})$ on the lower right corner of diagram
\ref{diagramprime} is homotopically equivalent to $K(\Z,2) \times K(\Z,4)$. We will 
compute the pullback through the whole diagram of the generator $i_4$ of the cohomology
group $H^4(K(\Z,4);\Q)$, considered as a subgroup
$$H^4(K(\Z,4);\Q) \subset H^4(K(\Z,2) \times K(\Z,4);\Q) = \Q i_4 \oplus \Q i_2^2$$

To simplify the notation we will denote by $L_1$, $L_2$ and $E$ the universal
quotient bundles on $\GG^1$, $\GG^1$ and $\GG^2$ correspondingly. Then the
chern class formula for the tensor product and the fundamental result
of \cite{LM} compute the composition $(\otimes_1 \times \otimes_2)^*(c+c)^*$: 
\begin{multline}\label{firstcomposition}
(\otimes_1 \times \otimes_2)^*(c+c)^*(i_4) = (\otimes_1 \times \otimes_2)^*(c_2(E) \otimes c_2(E)) = \\
c_1(L_1)^2 + c_1(L_1)c_1(E) + c_2(E) + 
c_1(L_2)^2 + c_1(L_2)c_1(E) + c_2(E)
\end{multline}

Now, notice that theorem
\ref{hurewicz} implies that in rational cohomology
\begin{equation}\label{pullbackphi}
\phi^*(i_2)= \omega \otimes 1 + 1\otimes \omega \; \; \text{and} \; \;
\phi^*(i_4)= \omega^2 \otimes 1 + 1\otimes \omega^2
\end{equation}

Hence, using the previous equation and the description that we
have for $\rho$ we get
\begin{multline} \label{secondcomposition}
(\phi \times id)^*(\rho')^*(i_4)= (\phi \times id)^*(i_2 \otimes i_2 +
1 \otimes i_4 + b(i_2^2 \otimes 1) + a(i_4 \otimes 1) + 1 \otimes i_4) = \\
c_1(L_1)c_1(E) + c_1(L_2)c_1(E) + c_2(E) + \\
a(c_1(L_1)+c_1(L_2))^2 + b(c_1(L_1)^2 + c_1(L_2)^2) + c_2(E)
\end{multline}

Setting equal the compositions \ref{firstcomposition} and \ref{secondcomposition}
we get that $b=0$ and therefore $a=1$, since there is no term $2c_1(L_1)c_1(L_2)$
in \ref{firstcomposition}.

\end{proof}

                                  %

\begin{bibsection}
\begin{biblist}
\bib{BLLMM}{article}{
author={Boyer, Charles P.},
author={Lawson, H. Blaine, Jr.},
author={Lima-Filho, Paulo},
author={Mann, Benjamin M.},
author={Michelsohn, Marie-Louise},
title={Algebraic cycles and infinite loop spaces},
journal={Invent. Math.},
volume={113},
date={1993},
number={2},
pages={373--388},
issn={0020-9910},
review={\MR{1228130 (95a:55021)}},
}

\bib{DT}{article}{
author={Dold, Albrecht},
author={Thom, Ren{\'e}},
title={Quasifaserungen und unendliche symmetrische Produkte},
language={German},
journal={Ann. of Math. (2)},
volume={67},
date={1958},
pages={239--281},
issn={0003-486X},
review={\MR{0097062 (20 \#3542)}},
}
\bib{LM}{article}{
author={Lawson, H. Blaine, Jr.},
author={Michelsohn, Marie-Louise},
title={Algebraic cycles, Bott periodicity, and the Chern characteristic
map},
conference={
title={The mathematical heritage of Hermann Weyl},
address={Durham, NC},
date={1987},
},
book={
series={Proc. Sympos. Pure Math.},
volume={48},
publisher={Amer. Math. Soc.},
place={Providence, RI},
},
date={1988},
pages={241--263},
review={\MR{974339 (90d:14010)}},
}

\bib{Segaltensor}{article}{
author={Segal, Graeme},
title={Categories and cohomology theories},
journal={Topology},
volume={13},
date={1974},
pages={293--312},
issn={0040-9383},
review={\MR{0353298 (50 \#5782)}},
}
\bib{tot}{article}{
author={Totaro, Burt},
title={The total Chern class is not a map of multiplicative cohomology theories},
journal={Mathematische Zeitschrift},
volume={212},
date={1993},
pages={527--532}, 
}
\end{biblist}
\end{bibsection}

\end{document}